\title[Integral representation of moderate cohomology]{Integral representation of moderate cohomology}
\author{H{\aa}kan Samuelsson Kalm}
\thanks{Author partially supported by the Swedish Research Council}
\subjclass[2010]{32A26, 32A27, 32C25, 32C35}
\keywords{moderate cohomology, integral representation, residue current, coherent ideal sheaf, complex space}
\address{H{\aa}kan Samuelsson Kalm, Department of Mathematical Sciences, Division of Algebra and Geometry, University of Gothenburg and 
Chalmers University of Technology, SE-412 96 G\"{o}teborg, Sweden}
\email{hasam@chalmers.se}
\date{\today}
\newtheorem{proposition}{Proposition}[section]
\newtheorem{theorem}[proposition]{Theorem}
\newtheorem{lemma}[proposition]{Lemma}
\theoremstyle{definition}
\newtheorem{definition}[proposition]{Definition}
\newtheorem{example}[proposition]{Example}
\newtheorem{remark}[proposition]{Remark}
\numberwithin{equation}{section}
\DeclareMathOperator{\Hom}{\mathscr{H}\text{\kern -3pt {\calligra\Large om}}\,}
\DeclareMathOperator{\Ext}{\mathscr{E}\text{\kern -3pt {\calligra\Large xt}}\,\,}
\DeclareMathOperator{\Image}{\mathscr{I}\text{\kern -3pt {\calligra\Large m}}\,}
\DeclareMathOperator{\Ker}{\mathscr{K}\text{\kern -3pt {\calligra\Large er}}\,}
\newcommand{\C}{\mathbb{C}}
\newcommand{\debar}{\bar{\partial}}
\newcommand{\HH}{\mathscr{H}}
\newcommand{\R}{\mathbb{R}}
\newcommand{\J}{\mathcal{J}}
\newcommand{\PM}{\mathscr{P} \kern -3pt \mathscr{M}}
\newcommand{\hol}{\mathscr{O}}
\newcommand{\K}{\mathscr{K}}
\newcommand{\Proj}{\mathscr{P}}
\newcommand{\CH}{\mathscr{C} \kern -2pt \mathscr{H}}
\newcommand{\B}{\mathbb{B}}
\newcommand{\Om}{\mathit{\Omega}}
\newcommand{\ett}{\mathbf{1}}
\def\newop#1{\expandafter\def\csname #1\endcsname{\mathop{\rm #1}\nolimits}}
\begin{document}
\nocite{*}
\bibliographystyle{plain}

\begin{abstract}
We make the classical Dickenstein-Sessa canonical representation in local moderate cohomology
explicit by an integral formula. We also provide a similar representation of the higher local moderate cohomology groups.
The results are related to holomorphic forms 
on non-reduced complex spaces.


\end{abstract}

\maketitle
\thispagestyle{empty}

\section{Introduction}
Let $M$ be a complex $N$-dimensional manifold and $Y\subset M$ an analytic subset of pure codimension $\kappa$. 
If $\mu$ is a $\debar$-closed $(p,\kappa)$-current on $M$ with support in $Y$, then by the classical result of
Dickenstein and Sessa in \cite{DS2} and \cite{DS} there is locally a decomposition
\begin{equation}\label{grund}
\mu=\nu+\debar\tau,
\end{equation}
where $\nu$ is a uniquely determined so called Coleff-Herrera current with support in $Y$ and $\tau$ is a current with support in $Y$.
Thus, the local moderate cohomology associated with $Y$ is canonically represented by the Coleff-Herrera currents supported in $Y$.
In addition, if $\mathcal{J}\subset\hol_M$ is an ideal sheaf with $Z(\J)=Y$ such that $\mathcal{J}\mu=0$, i.e.,
$h\mu=0$ for any $h\in \mathcal{J}$, then $\J\nu=0$ and $\tau$ can be chosen so that $\J\tau=0$, see, e.g., \cite{BjorkAbel} or \cite{MatsNoether}.
The known proofs of the Dickenstein-Sessa decomposition do not give $\nu$ in \eqref{grund} explicitly.
The main objective of this paper is to find $\nu$ in \eqref{grund} in terms of $\mu$ by an explicitly integral formula.

Coleff-Herrera currents are modeled on the Coleff-Herrera product $\nu_{f}:=\debar(1/f_1)\wedge\cdots\wedge\debar(1/f_{\kappa})$
associated to a locally complete intersection ideal $\mathcal{J}_f=\langle f_1,\ldots,f_k\rangle\subset \hol_M$ introduced in \cite{CH}.
If $\J_f\mu=0$ then $\nu$ in \eqref{grund}
is of the form $\xi\wedge\nu_{f}$ for some holomorphic $p$-form $\xi$, see \cite{DS}. 

We are interested in a general coherent ideal sheaf $\J\subset\hol_M$ with $Y=Z(\mathcal{J})$ and we
follow the approach of Bj\"{o}rk to Coleff-Herrera currents with support in $Y$, see, e.g., \cite[Section~6.2]{BjorkAbel}.
Let $\mathcal{J}_Y\subset \hol_M$ be the ideal sheaf of functions vanishing on $Y$. 
The sheaf of Coleff-Herrera currents of degree $p$ with support in $Y$, $\CH_Y^p$, is 
the sheaf of germs of $\debar$-closed $(p,\kappa)$-currents $\nu$ such that $\overline{\mathcal{J}}_Y\nu=0$ and
with the \emph{standard extension property} (SEP) with respect to $Y$. 
That a current has the SEP with respect to $Y$ means roughly speaking that it has no mass concentrated on
proper analytic subsets of $Y$, see Section~\ref{PMsektion} below, and the condition $\overline{\J}_Y\nu=0$ means that $\nu$ only involves
holomorphic derivatives. If $Z$ is a cycle with $|Z|=Y$ then the integration current $[Z]$ is a section of $\CH_Y^{\kappa}$.

Our considerations are local or semi-global so from now on $M\subset \C^N$ is a pseudoconvex domain. 
Given a Hermitian free resolution of $\hol_M/\J$ (which always exists in relatively compact open subsets of $M$),
in \cite{AW1} Andersson and Wulcan introduced an associated vector-valued current 
$R=R_{\kappa}+R_{\kappa+1}+\cdots$,
where $R_k$ is a $(0,k)$-current taking values in an auxiliary trivial vector bundle $E_k$, 
such that a holomorphic function $h$ is a section of $\J$ (locally)
if and only if $hR=0$. If $\J$ is a complete intersection then $R$ is the corresponding Coleff-Herrera product. 
In \cite{MatsNoether} Andersson shows that any current in $\CH_Y^p$ is of the form $\xi\cdot R_{\kappa}$ for some
holomorphic $p$-form $\xi$ with values in $E_k^*$.

\begin{theorem}\label{ny}
Given $\mathcal{J}\subset\hol_M$, $M'\Subset M$, and $p\geq 0$ there is an integral kernel $P(\zeta,z)$ such that 
$\zeta\mapsto P(\zeta,z)$ is a holomorphic $p$-form in $M'$ with values in $E_{\kappa}^*$,
$z\mapsto P(\zeta,z)$ is a smooth
compactly supported $(N-p,N-\kappa)$-form in $M$, and if $\mu$ is a $\debar$-closed 
$(p,\kappa)$-current in $M$ with $\mathcal{J}\mu=0$ then,
in $M'$, $\nu$ in \eqref{grund} is given by
\begin{equation*}
\check{\Proj}\mu := R_{\kappa}\cdot \int_z P(\zeta,z)\wedge\mu(z).
\end{equation*}
\end{theorem}

The integral means the action of $\mu$ on the test form $z\mapsto P(\zeta,z)$.
The kernel $P(\zeta,z)$ is explicitly constructed given a free resolution of $\hol_M/\mathcal{J}$, 
see Section~\ref{intopsektion} below.
The operator $\check{\Proj}$ maps any $(p,\kappa)$-current in $M$ to a $(p,\kappa)$-current in $M'$ annihilated by both $\J$
and $\overline{\sqrt{\J}}$ and
with the SEP with respect to $Y$. 

One application of Theorem~\ref{ny} is to the problem of factorizing cycles. Recall that if
$\mathcal{J}=\langle f_1,\ldots,f_{\kappa}\rangle$ is a complete intersection and $Z$ is the corresponding fundamental cycle,
i.e., $Z=\sum_j m_jY_j$ where $m_j$ are certain multiplicities and $Y_j$ are the irreducible components of $Z(\J)$,
then 
\begin{equation}\label{PLCH}
[Z]=\frac{1}{(2\pi i)^{\kappa}} \debar\frac{1}{f_1}\wedge\cdots\wedge\debar\frac{1}{f_{\kappa}}\wedge df_{\kappa}\wedge\cdots\wedge df_1,
\end{equation}
see \cite{CH}. This was globalized to locally complete intersections by Demailly and Passare, \cite{DP}, and further generalized 
by Andersson in \cite{MatsLelong}. Recently, L{\"a}rk{\"a}ng and Wulcan, \cite{LW}, proved a formula similar to \eqref{PLCH}
for the fundamental cycle of a quite general complex subspace. By Theorem~\ref{ny} we get that if $Z$ is any cycle
with $|Z|=Z(\J)$ then
\begin{equation*}
[Z]=R_{\kappa}\cdot \int_z P(\zeta,z)\wedge [Z].
\end{equation*}




\smallskip

Let $\mathscr{C}_{\J}^{p,q}$ be the sheaf of 
$(p,q)$-currents in $M$ annihilated by $\J$ and notice that we have a complex $(\mathscr{C}_{\J}^{p,\bullet},\debar)$.
The Dickenstein-Sessa decomposition \eqref{grund} implies that there is a canonical isomorphism 
$\HH^{\kappa}(\mathscr{C}_{\J}^{p,\bullet},\debar)\simeq \CH^p_{\J}$, where $\CH^p_{\J}$ is the subsheaf of
$\CH^p_{Y}$ of currents annihilated by $\J$. The map $\CH^p_{\J}\to \HH^{\kappa}(\mathscr{C}_{\J}^{p,\bullet},\debar)$
is induced by the inclusion $\CH^p_{\J} \hookrightarrow \mathscr{C}_{\J}^{p,\kappa}$ and the inverse map
$\HH^{\kappa}(\mathscr{C}_{\J}^{p,\bullet},\debar)\to\CH^p_{\J}$ is induced by our operator $\check{\Proj}$.
The second objective of this paper is to give a similar description of the higher cohomology
$\HH^{q}(\mathscr{C}_{\J}^{p,\bullet},\debar)$, $q>\kappa$. It is well-known that
$\HH^{q}(\mathscr{C}_{\J}^{p,\bullet},\debar)=0$ for $q<\kappa$, see, e.g., \cite{BjorkDmod}.

\begin{theorem}\label{egenDSthm1}
There are fine sheaves of currents $\mathscr{B}_{\J}^{p,q}\subset \mathscr{C}_{\J}^{p,q}$, $p\geq 0$, $q\geq\kappa$,
such that
$\debar$ maps $\mathscr{B}_{\J}^{p,q}$ to $\mathscr{B}_{\J}^{p,q+1}$ and
the inclusion $\mathscr{B}_{\J}^{p,q}\subset \mathscr{C}_{\J}^{p,q}$ induces an isomorphism
$\HH^q\big(\mathscr{B}_{\J}^{p,\bullet},\debar\big)\simeq \HH^q\big(\mathscr{C}_{\J}^{p,\bullet},\debar\big)$.
Setting $\mathscr{B}_{\J}^{p,q}=0$ for $q<\kappa$ this holds for all $q$.
Moreover, $\overline{\sqrt{\J}}\mathscr{B}_{\J}^{p,q}=d\overline{\sqrt{\J}}\wedge\mathscr{B}_{\J}^{p,q}=0$.
\end{theorem}

The point is that $\mathscr{B}_{\J}^{p,q}$ is much smaller and less singular than $\mathscr{C}_{\J}^{p,q}$. 
Moreover, $\mathscr{B}_{\J}^{p,q}$ is a concretely defined subsheaf of the sheaf of
\emph{pseudomeromorphic currents}, see below, and the kernel of $\debar$ in 
$\mathscr{B}_{\J}^{p,\kappa}$ is $\CH_{\J}^p$.

Theorem~\ref{egenDSthm1} shows that the natural inclusion of complexes 
$(\mathscr{B}_{\J}^{p,\bullet},\debar) \hookrightarrow (\mathscr{C}_{\J}^{p,\bullet},\debar)$ is a 
quasi-isomorphism, i.e., an isomorphism on cohomology. 
Our final result provides in particular an explicit projection operator 
$\check{\Proj}\colon (\mathscr{C}_{\J}^{p,\bullet},\debar) \to (\mathscr{B}_{\J}^{p,\bullet},\debar)$ giving the inverse
of this quasi-isomorphism. Let $\mathscr{C}^{p,q}$ be the sheaf of $(p,q)$-currents in $M$.  

\begin{theorem}\label{egenDSthm2}
There is an integral operator 
$\check{\Proj}\colon \mathscr{C}^{p,q}(M)\to \mathscr{B}_{\J}^{p,q}(M')$ 
giving a quasi-isomorphism of complexes $(\mathscr{C}_{\J}^{p,\bullet},\debar) \to (\mathscr{B}_{\J}^{p,\bullet},\debar)$.
Moreover, there is an integral operator
$\check{\K}\colon \mathscr{B}_{\J}^{p,q}(M)\to \mathscr{B}_{\J}^{p,q-1}(M')$ such that
$\mu_{\restriction_{M'}} = \debar\check{\K}\mu + \check{\K}\debar\mu + \check{\Proj}\mu$
for any $\mu\in \mathscr{B}_{\J}^{p,q}(M)$.
\end{theorem}

\begin{remark}
It is well-known that the cohomology sheaves $\HH^q(\mathscr{C}_{\J}^{p,\bullet},\debar)$ are isomorphic to 
$\Ext^{q}(\hol_M/\J,\Om^p_M)$, where $\Om^p_M$ is the sheaf of holomorphic $p$-forms on $M$,
cf.\ Section~\ref{Bsektion} below.
\end{remark}

Assume temporarily that $Y$ is smooth and that $\mathcal{J}=\J_Y$; let $n=N-\kappa=\text{dim}\, Y$. Then 
in view of Example~\ref{exempel} below,
$\mathscr{B}_{\J}^{N,q}=i_*\mathscr{E}_Y^{n,q-\kappa}$, where $\mathscr{E}_Y$ is the sheaf of smooth forms on $Y$
and $i\colon Y\to M$ is the inclusion. Moreover, it is well-known that
$\CH_{\J}^N=i_*\Om_Y^{n}$,
see, e.g., \cite{AS}. Thus,
\begin{equation}\label{Bkomplex}
0\to \mathscr{B}_{\J}^{N,\kappa} \stackrel{\debar}{\longrightarrow} \cdots \stackrel{\debar}{\longrightarrow}
\mathscr{B}_{\J}^{N,N} \to 0
\end{equation}
is $i_*$ of the Dolbeault complex of smooth $(n,\bullet)$-forms on $Y$ and the cohomology at $\mathscr{B}_{\J}^{N,\kappa}$ 
is $i_*$ of the holomorphic $n$-forms on $Y$.
For not necessarily smooth $Y$ and $\J\subset\J_Y$ of pure codimension $\kappa$, Andersson and L\"{a}rk\"{a}ng
introduce a notion of holomorphic top-degree forms on the possibly non-reduced complex space $Y_{\J}=(Y,\hol_M/\J_{\restriction_Y})$,
see \cite[Section~5]{AL}; see \cite{Barlet} for the reduced case. Via $i_*$ this notion precisely corresponds to $\CH_{\J}^N$.  
By analogy it is reasonable to think of \eqref{Bkomplex} as a certain Dolbeault complex for $Y_{\J}$
with the cohomology at $\mathscr{B}_{\J}^{N,\kappa}$ being the holomorphic top-degree forms on $Y_{\J}$
and $\check{\Proj}$ as a projection operator onto these forms.

\bigskip

{\bf Acknowledgment:} I would like to thank Mats Andersson and Elizabeth Wulcan for stimulating discussions
on the topic of this paper.

\section{Pseudomeromorphic currents and weighted integral formulas}
\subsection{Pseudomeromorphic currents}\label{PMsektion}
In one complex variable $z$ it is elementary to see that the principal value current $1/z^m$ exists
and can be defined, e.g., as the limit as $\epsilon \to 0$ in the sense of currents
of $\chi(|h(z)|^2/\epsilon)/z^m$, where $h$ is a non-trivial holomorphic function (or tuple) vanishing at $z=0$
and $\chi$ is a smooth regularization of the characteristic function
of $[1,\infty)\subset \R$; for the rest of the paper $\chi$ will denote such a function. 
The current $1/z^m$ can also be defined as the value at $\lambda=0$ of the analytic continuation
of the current-valued function $\lambda \mapsto |h(z)|^{{2\lambda}}/z^m$. 
It follows that the \emph{residue current} $\debar(1/z^m)$ can be computed as the 
limit of $\debar\chi(|h(z)|^2/\epsilon)/z^m$ or as the value at $\lambda=0$ of 
$\lambda \mapsto \debar |h(z)|^{{2\lambda}}/z^m$.
Since tensor products of currents are well-defined we can form the current
\begin{equation}\label{elementary}
\tau=\debar \frac{1}{z_1^{m_1}}\wedge \cdots \wedge \debar \frac{1}{z_r^{m_r}}\wedge 
\frac{\gamma(z)}{z_{r+1}^{m_{r+1}}\cdots z_M^{m_M}}
\end{equation}
in $\C^M$, where $m_1,\ldots,m_r$ are positive integers, $m_{r+1},\ldots,m_{M}$ are nonnegative integers, and 
$\gamma$ is a smooth compactly supported form. Notice that $\tau$ is anti-commuting in the residue factors 
$\debar(1/z_j^{m_j})$ and commuting in the principal value factors $1/z_k^{m_k}$.
We say that a current of the form \eqref{elementary} is an \emph{elementary pseudomeromorphic current}. 
A current $\mu$ on a complex manifold $X$ is \emph{pseudomeromorphic} if and only if $\mu$ is a locally
finite sum of currents of the form $\pi_*\tau$, where $\tau$ is of the form \eqref{elementary} and $\pi$ is a holomorphic
map from a neighborhood of $\text{supp}\, \gamma$ to $X$, see \cite[Theorem~2.15]{AW3}. 
Currents on reduced complex spaces are also defined, see \cite{HL}.
A current $\mu$ on a reduced pure-dimensional complex space $X$ is
pseudomeromorphic if and only if there is a modification $\pi\colon X'\to X$ with $X'$ smooth and a pseudomeromorphic current
$\mu'$ on $X'$ such that $\mu=\pi_*\mu'$, \cite[Theorem~2.15]{AW3}. This yields the subsheaf $\PM_X$ of the sheaf of 
germs of currents on any reduced pure-dimensional complex space $X$. Notice that, since $\debar$ maps an elementary
pseudomeromorphic current to a sum of such currents, $\debar$ maps $\PM$ to itself.
Moreover, if $X$ and $Z$ are reduced pure-dimensional complex spaces and $\mu\in\PM(X)$, then $\mu\otimes 1\in \PM(X\times Z)$,
see \cite[Section~2]{AS}. Below, we will omit ``$\otimes 1$'' and write, e.g., $\mu(x)$ to denote on what variables a current
depends.  

\medskip

\noindent {\bf Dimension principle.}(\cite[Corollary~2.4]{AW2}, \cite[Proposition~2.3]{AS}) 
\emph{Let $X$ be a reduced pure-dimensional
complex space, let $\mu\in\PM(X)$, and assume that $\mu$ has support contained in a subvariety $V\subset X$. 
If $h\in\hol_X$ vanishes on $V$ then $\bar{h}\mu=d\bar{h}\wedge\mu=0$.
Moreover, if $\mu$ has bidegree $(*,q)$ and $\textrm{codim}_X V>q$, then $\mu=0$.}

\smallskip

For an analytic subset $Y\subset X$ of pure codimension $\kappa$, the sheaf $\CH_Y^p$ is characterized as the 
subsheaf of $\PM^{p,\kappa}_X$ of germs of $\debar$-closed currents with support in $Y$, see \cite{MatsUnique}. 

Pseudomeromorphic currents can be ``restricted''
to analytic (or constructible) subsets: Let $\mu\in\PM(X)$, let $V\subset X$ be an analytic subset,
and set $V^c:=X\setminus V$. Then the restriction of $\mu$ to the open subset $V^c$ has a natural 
pseudomeromorphic extension $\ett_{V^c}\mu$ to $X$. In \cite{AW2}, $\ett_{V^c}\mu$ is defined as the 
value at $0$ of the analytic continuation of the current-valued function $\lambda\mapsto |h|^{2\lambda}\mu$,
where $h$ is any holomorphic tuple with zero set $V$; $\ett_{V^c}\mu$ can also be defined as
$\lim_{\epsilon\to 0}\chi(|h|^2v/\epsilon)\mu$, where $v$ is any smooth strictly positive function,
see \cite[Lemma~2.6]{AW3}, cf.\ also \cite[Lemma~6]{LSprodukter}. 
The current $\ett_V\mu:=\mu-\ett_{V^c}\mu$ thus
is a pseudomeromorphic current with support contained in $V$, and if $\text{supp}\,\mu\subset V$, then $\ett_V\mu=\mu$. 
Moreover, see \cite[Section~2.2]{AW3}, if $V$ and $W$ are any 
constructible subsets then $\ett_V\ett_W\mu=\ett_{V\cap W}\mu$.
A current $\mu\in \PM(X)$ has the standard extension property (SEP) with respect to an 
analytic subsets $V\subset X$ if $\ett_W\mu=0$ for all germs of analytic subsets $W\subset X$ such that $\text{codim}_V W\cap V>0$.

Recall that a current on $X$ is said to be semi-meromorphic if it is a principal value current of the form
$\alpha/f$, where $\alpha$ is a smooth form and $f$ is a holomorphic function or section 
of a line bundle such that $f$ does not vanish identically on any component of $X$.
Following \cite{AS}, see also \cite[Section~4]{AW3}, we say that a current $a$ on $X$ is 
\emph{almost semi-meromorphic} if there is a modification $\pi\colon X'\to X$ and a semi-meromorphic 
current $\alpha/f$ on $X'$ such that $a=\pi_*(\alpha/f)$; if $f$ takes values in $L\to X'$ we need
also $\alpha$ to take values in $L\to X'$. 
If $a$ is almost semi-meromorphic on $X$, then the smallest Zariski-closed set outside of which $a$ is smooth
has positive codimension and is denoted $ZSS(a)$, the \emph{Zariski-singular support} of $a$, see \cite[Section~4]{AW3}.

For proofs of the statements in this paragraph we refer to \cite[Section~4]{AW3}, see also \cite[Section~2]{AS}.
Let $a$ be an almost semi-meromorphic current on $X$ and let $\mu\in\PM(X)$. Then there is a 
unique pseudomeromorphic current $T$ on $X$ coinciding with $a\wedge\mu$ outside of $ZSS(a)$ and 
such that $\ett_{ZSS(a)}T=0$. If $h$ is a holomorphic tuple, or section of a 
Hermitian vector bundle, such that $\{h=0\}=ZSS(a)$, then $T=\lim_{\epsilon\to 0}\chi(|h|^2/\epsilon)a\wedge\mu$;
henceforth we will write $a\wedge\mu$ in place of $T$. One defines $\debar a\wedge \mu$ so that Leibniz' rule holds,
i.e., $\debar a\wedge \mu:=\debar(a\wedge\mu)-(-1)^{\textrm{deg}\, a}a\wedge\debar\mu$.
If $\mu$ is almost semi-meromorphic then $a\wedge \mu$ is almost semi-meromorphic and, in fact,
$a\wedge\mu=(-1)^{\textrm{deg}\, a \, \textrm{deg}\, \mu}\mu\wedge a$.

\subsection{Currents associated to generically exact complexes}\label{URsektion}
Let $M$ be an $N$-dimensional complex manifold and let $\J\subset\hol_M$ be a coherent ideal sheaf. 
Suppose that we have a complex
\begin{equation*}
0\to E_m\stackrel{f_m}{\longrightarrow} \cdots \stackrel{f_1}{\longrightarrow} E_0\to 0
\end{equation*}
of holomorphic Hermitian vector bundles, with $E_0$ being the trivial line bundle, 
such that the associated sheaf complex $(\hol(E_{\bullet}),f_{\bullet})$
is a resolution of $\hol_M/\J$. The bundle $E:=\oplus_jE_j$ gets a natural superstructure by setting $E^+:=\oplus_jE_{2j}$
and $E^-:=\oplus_jE_{2j+1}$.
Following \cite{AW1} we define pseudomeromorphic currents $U$ and $R$ with values in $\textrm{End}(E)$ associated with the
Hermitian complex $(E_{\bullet},f_{\bullet})$. Notice that $\textrm{End}(E)$
gets an induced superstructure and so spaces of forms and currents with values in $E$ or $\textrm{End}(E)$
get superstructures as well.  
Let $f:=\oplus_jf_j$ and set 
$\nabla:=f-\debar$, which then becomes an odd mapping on spaces of forms or currents with values in $E$
such that $\nabla^2=0$;
notice that $\nabla$ induces an odd mapping $\nabla_{\textrm{End}}$ on $\textrm{End}(E)$-valued forms or currents
such that $\nabla^2_{\textrm{End}}=0$.
Outside of $Y=Z(\J)$, $(E_{\bullet},f_{\bullet})$ is pointwise exact and we 
let $\sigma_k\colon E_{k-1}\to E_k$ be the pointwise minimal inverse of $f_k$.
Set $\sigma:=\sigma_1+\sigma_2+\cdots$ and let $u:=\sigma+\sigma\debar\sigma + \sigma(\debar\sigma)^2+\cdots$.
Notice that $u=\sum_{0\leq \ell <k}u^{\ell}_k$,
where $u^{\ell}_k:=\sigma_{k}\debar\sigma_{k-1}\cdots\debar\sigma_{\ell+1}$ is a smooth 
$\textrm{Hom}(E_{\ell},E_{k})$-valued $(0,k-\ell-1)$-form outside of $Y$.
One can show that $\nabla_{\textrm{End}}u=\textrm{Id}_{E}$.
The form $u$ can be exended as a current across $Z$ by setting
\begin{equation}\label{Ureg}
U:=\lim_{\epsilon\to 0}\chi(|F|^2/\epsilon)u,
\end{equation}
where $F$ is a (non-trivial) holomorphic tuple vanishing on $Y$, see, e.g., \cite[Section~2]{AW1}.
As with $u$ we will write $U=\sum_{0\leq \ell<k}U^k_{\ell}$, where now $U^{\ell}_k$ is a 
$\textrm{Hom}(E_{\ell},E_{k})$-valued $(0,k-\ell-1)$-current; in fact, $U$ is an almost semi-meromorphic current, cf., e.g., \cite{AW3}.
The current $R$ is defined by $\nabla_{\textrm{End}} U=\textrm{Id}_{E} - R$
and hence $R$ is supported on $Y$ and $fR-\debar R=\nabla_{\textrm{End}}R=0$. 
Notice that $R$ is an almost semi-meromorphic current plus $\debar$ of such a current.
One can check that 
\begin{equation}\label{Rreg}
R=\lim_{\epsilon\to 0}\, \big(1-\chi(|F|^2/\epsilon)\big)\textrm{Id}_{E} + \debar\chi(|F|^2/\epsilon)\wedge u.
\end{equation}
We write $R=\sum_{0\leq\ell<k}R^{\ell}_k$, where $R^{\ell}_k$ is a 
$\textrm{Hom}(E_{\ell},E_{k})$-valued $(0,k-\ell)$-current. 
Since $E_0$ is the trivial line bundle we have
$\text{Hom}(E_0,E_k)\simeq E_k$ and we may identify $R^0_k$ with an $E_k$-valued current; 
sometimes we just write $R_k$ for $R^0_k$.

Since the sheaf complex $(\hol(E_{\bullet}),f_{\bullet})$ is supposed to be a resolution of $\hol_M/\J$, it follows
from \cite{AW1} that $R=R_{\kappa}^0+R_{\kappa+1}^0+\cdots$, where $\kappa=\text{codim}\, Y$,
and that a holomorphic function $g$ is a section of $\J$ if and only if the $E$-valued current $Rg$ vanishes. 

\subsection{Weighted integral formulas}\label{intformelsektion}
We apply Andersson's method, \cite{MatsIntrep}, of generating weighted integral formulas in a domain
$D\subset \C^N$. To begin with, suppose that $k(\zeta,z)$ is an integrable
$(N,N-1)$-form in $D\times D$ and $p(\zeta,z)$ is a smooth $(N,N)$-form in $D\times D$ such that
\begin{equation}\label{currentKoppel}
\debar k(\zeta,z) = [\Delta^D] - p(\zeta,z),
\end{equation}
where $[\Delta^D]$ is the current of integration along the diagonal. Applying \eqref{currentKoppel}
to test forms of the form $\psi_{\epsilon}(z)\wedge \varphi(\zeta)$, where $\psi_{\epsilon}$ is an approximate identity
and $\varphi$ is a test form in $D$,
one obtains the integral formula
\begin{equation}\label{penna}
\varphi(z) = \debar_{z} \int_{D_{\zeta}} k(\zeta,z)\wedge \varphi(\zeta) + 
\int_{D_{\zeta}} k(\zeta,z)\wedge \debar\varphi(\zeta) + \int_{D_{\zeta}} p(\zeta,z)\wedge \varphi(\zeta)
\end{equation}
for all $z\in D$ by letting $\epsilon\to 0$.

Following \cite{MatsIntrep}, to find such $k$ and $p$ 
let $\eta=(\eta_1,\ldots,\eta_N)$ be a holomorphic tuple in $D\times D$ that defines the diagonal and let 
$\Lambda_{\eta}$ be the exterior algebra spanned by $\Lambda^{0,1}T^*(D \times D)$
and the $(1,0)$-forms $d\eta_1,\ldots,d\eta_N$. On forms with values in $\Lambda_{\eta}$ interior multiplication
with $2\pi i \sum\eta_j \partial/\partial \eta_j$, denoted $\delta_{\eta}$, is defined; set $\nabla_{\eta}=\delta_{\eta}-\debar$.
Let $s$ be a smooth $(1,0)$-form in $\Lambda_{\eta}$ such that $|s|\lesssim |\eta|$
and $|\eta|^2\lesssim |\delta_{\eta}s|$ and let $B=\sum_{k=1}^N s\wedge (\debar s)^{k-1}/(\delta_{\eta}s)^k$.
It is proved in \cite{MatsIntrep} that then $\nabla_{\eta} B = 1-[\Delta^D]$. Identifying terms of top degree
we see that $\debar B_{N,N-1} = [\Delta^D]$ and so \eqref{currentKoppel} is satisfied with $k(\zeta,z)=B_{N,N-1}$ and $p(\zeta,z)=0$.
For instance, if we take $s=\partial |\zeta-z|^2$ and $\eta=\zeta-z$, then the resulting $B$ is sometimes called
the full Bochner-Martinelli form and the term of top degree is the classical Bochner-Martinelli kernel.

Let $D_1, D_2\subset D$. A smooth section $g(\zeta,z)=g_{0}+\cdots +g_{N}$ of $\Lambda_{\eta}$
over $D_1\times D_2$ such that $\nabla_{\eta} g=0$ in $D_1\times D_2$ and $g_{0}(z,z)=1$ for $z\in D':=D_1\cap D_2$ 
is called a \emph{weight} 
with respect to $D_1\times D_2$; $g_{j}$ is the sum of the terms of $g$ of bidegree $(j,j)$. 
Notice that the exterior product of two weights again is a weight
(with respect to a suitable set).
If $g$ is a weight with respect to $D_1\times D_2$, then it follows that $\nabla_{\eta} (g\wedge B) = g-g\wedge[\Delta^D]$ 
in $D_1\times D_2$ and, identifying 
terms of bidegree $(N,N-1)$, we get that
\begin{equation}\label{gulp}
\debar (g\wedge B)_{N,N-1} = [\Delta^D] - g_{N}
\end{equation} 
in $D_1\times D_2$. It follows that if $\varphi$ is smooth
with compact support in $D_1$, then \eqref{penna}, with $k(\zeta,z)=(g(\zeta,z)\wedge B)_{N,N-1}$ and $p(\zeta,z)=g_{N}(\zeta,z)$, 
holds in $D_2$, and vice versa.

\begin{example}\label{hjalpvikt}
Let $D\Subset \C^N$ be pseudoconvex and let $K\subset D$ be a holomorphically convex compact subset. Let
$\rho$ be a smooth compactly supported function in $D$ that is $1$ in a neighborhood of $K$. 
One can find a smooth form $\tilde{s}(\zeta,z)=\tilde{s}_1(\zeta,z)d\eta_1 + \cdots + \tilde{s}_N(\zeta,z)d\eta_N$,
defined for $z$ in a neighborhood of $\text{supp}\,\debar\rho$ and $\zeta$ in a neighborhood $\tilde{D}$ of $K$,
such that $\zeta\mapsto \tilde{s}(\zeta,z)$ is holomorphic and $\delta_{\eta}\tilde{s}=1$. Then
\begin{equation*}
g(\zeta,z):=\rho(z) - \debar\rho(z)\wedge \sum_{k=1}^N\tilde{s}\wedge(\debar \tilde{s})^{k-1},
\end{equation*}
is a weight with respect to $\tilde{D}\times D$ that depends holomorphically on $\zeta$ and has
compact support in $D_z$; cf.\ \cite[Example~2]{MatsIntrepII} 
or \cite[Example~5.1]{AS} in case $D$ is the unit ball $\B$ in $\C^N$. 
\end{example}

\begin{example}\label{divvikt1}
Let $D\Subset \C^N$ be pseudoconvex, let $\J\subset\hol_D$ be a coherent sheaf of ideals in $D$, and assume that 
there is a free resolution $(\hol(E_{\bullet}),f_{\bullet})$ of $\hol_D/\J$ in $D$. Let $U=U(\zeta)$ and $R=R(\zeta)$ be associated currents,
let $U^{\epsilon}$ and $R^{\epsilon}$ be the regularizations given by \eqref{Ureg} and \eqref{Rreg}, respectively, 
and let $U^{\epsilon,\ell}_k$ and
$R^{\epsilon,\ell}_k$ be the parts taking values in 
$\text{Hom}(E_{\ell},E_k)$.
By \cite[Proposition~5.3]{MatsIntrepII}
we can, for $\ell\leq k$, find \emph{Hefer morphisms} 
$H^{\ell}_k$, which are holomorphic sections of $\Lambda_{\eta}\otimes \text{Hom}(E^{\zeta},E^z)$,
such that $H^{\ell}_k$ is a holomorphic $k-\ell$-form with values in $\text{Hom}(E^{\zeta}_k,E^z_{\ell})$
and 
\begin{equation*}
H_k^{k}\restriction_{\Delta_D} = \textrm{Id}_{E_k} \quad \textrm{and} \quad
\delta_{\eta} H_k^{\ell} = H_{k-1}^{\ell}f_k - f_{\ell+1}(z)H_k^{\ell+1}, \,\, k>\ell,
\end{equation*}
where $f_k=f_k(\zeta)$. One can check that then
\begin{equation*}
G^{\epsilon}:=
\sum_{k\geq 0} H^0_kR^{\epsilon,0}_k + f_1(z)\sum_{k\geq 1}H^1_{k}U^{\epsilon,0}_k,
\end{equation*}
is a weight with respect to $D\times D$ for all $\epsilon>0$; cf.\ \cite{MatsIntrepII}, \cite{AW1}, and \cite{AS}.
\end{example}


\section{Integral operators associated to an ideal sheaf}\label{intopsektion}
Let $D\Subset \C^N$ be pseudoconvex, let $\J\subset \hol_D$ be a coherent sheaf of ideals in $D$
with $Y=Z(\J)$ of pure codimension $\kappa$,
and let $D'\Subset D$. Let $g(\zeta,z)$ be any weight with respect to $D'\times D$ such that 
$z\mapsto g(\zeta,z)$ has compact support in some $D''\Subset D$ for all $\zeta$.
In $D''$ there is a free resolution
$(\hol(E_{\bullet}),f_{\bullet})$ of $\hol_D/\J$, associated currents $U=U(\zeta)$ and $R=R(\zeta)$, 
and, moreover, in $D''\times D''$ we can find associated Hefer morphisms $H^{\ell}_k$.
Then
$G^{\epsilon}=HR^{\epsilon}+f_1(z)HU^{\epsilon}$ of Example~\ref{divvikt1} is a weight with respect to $D''\times D''$.
It follows that $G^{\epsilon}\wedge g$ is a weight with respect to $D'\times D$ and has compact support in $D_z$.

Notice, in view of Example~\ref{hjalpvikt}, that we may choose the weight $g$ so that it contains no
$d\bar{\zeta}$-differentials and $\zeta\mapsto g(\zeta,z)$ is holomorphic; we then say that $g$ is holomorphic in $\zeta$.


\begin{lemma}\label{Koppel-test}
If $\varphi$ is a test form in $D'$, then for all $\epsilon>0$ and all $z\in D$
\begin{eqnarray*}
\varphi(z) &=& \debar_z \int_{\zeta} (HR^{\epsilon}\wedge g\wedge B)_{N,N-1}\wedge \varphi(\zeta) +
\int_{\zeta} (HR^{\epsilon}\wedge g\wedge B)_{N,N-1}\wedge \debar\varphi(\zeta) \\
& &
+\int_{\zeta} (HR^{\epsilon}\wedge g)_{N,N}\wedge\varphi(\zeta) + \phi \psi,
\end{eqnarray*}
where $\phi$ is a section of $\J$ and $\psi$ is some test form in $D$. The integrals on the right-hand side are test forms in $D$. 
\end{lemma}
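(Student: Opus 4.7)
The plan is to apply the general weighted Koppelman identity \eqref{penna}/\eqref{gulp} to the composite weight $G^{\epsilon}\wedge g$, and then to peel off the part of $G^{\epsilon}$ carrying the explicit factor $f_1(z)$ and absorb it into the error term $\J\psi$. Since $g$ is a weight with respect to $D'\times D$ and $G^{\epsilon}$ is a weight with respect to $D''\times D''$, their exterior product $G^{\epsilon}\wedge g$ is again a weight with respect to $D'\times D$, and $z\mapsto (G^{\epsilon}\wedge g)(\zeta,z)$ inherits the compact support in $D$ from $g$. I would therefore begin by invoking \eqref{gulp} for $G^{\epsilon}\wedge g$, which, as in the derivation of \eqref{penna}, yields the identity
\begin{equation*}
\varphi(z) = \debar_z\!\int_{\zeta}(G^{\epsilon}\wedge g\wedge B)_{N,N-1}\wedge\varphi(\zeta) + \int_{\zeta}(G^{\epsilon}\wedge g\wedge B)_{N,N-1}\wedge\debar\varphi(\zeta) + \int_{\zeta}(G^{\epsilon}\wedge g)_{N,N}\wedge\varphi(\zeta)
\end{equation*}
for every test form $\varphi$ in $D'$ and every $z\in D$.

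Next, I would substitute the decomposition $G^{\epsilon}=HR^{\epsilon}+f_1(z)HU^{\epsilon}$ from Example~\ref{divvikt1}. The summand $HR^{\epsilon}=\sum_k H^0_kR^{\epsilon,0}_k$ contributes precisely the three explicit integrals in the statement. The summand $f_1(z)HU^{\epsilon}=f_1(z)\sum_{k\geq 1}H^1_k U^{\epsilon,0}_k$ contributes terms in which the holomorphic-in-$z$ factor $f_1(z)$ can be pulled outside of both the integral sign and $\debar_z$, leaving $f_1(z)\cdot\tilde{\psi}(z)$ with $\tilde{\psi}$ a sum of integrals of the same three types but with $HU^{\epsilon}$ in place of $HR^{\epsilon}$. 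Since the entries of $f_1$ are holomorphic sections of $\hol_D$ that locally generate $\J$ (by construction of the resolution $(\hol(E_{\bullet}),f_{\bullet})$ of $\hol_D/\J$), this term can be rewritten in the form $\J\psi$ for a suitable test form $\psi$ in $D$.

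It remains to verify that the three resulting integrals are genuine test forms in $D$. Compact support in $D$ is immediate from the compact $z$-support of $g$. For smoothness in $z$, one uses that for each fixed $\epsilon>0$ the regularizations $R^{\epsilon}$, $U^{\epsilon}$ coming from \eqref{Ureg}, \eqref{Rreg} are smooth, while $B$ has only the locally integrable Bochner-Martinelli-type singularity along the diagonal, so the integrals depend smoothly on $z$ by the standard estimates for such kernels. The main (and essentially only) obstacle is really bookkeeping: keeping track of the bidegrees in $\Lambda_{\eta}$ and of which factor is holomorphic in $\zeta$ versus $z$, in order to make sure that $f_1(z)$ commutes out of $\debar_z$ and the $\zeta$-integral and that the residual error lives in $\J\cdot\mathscr{C}^{\infty}_c(D)$, not merely in $\J\cdot\mathscr{C}^{\infty}(D)$.
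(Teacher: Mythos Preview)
Your proposal is correct and follows essentially the same approach as the paper: apply the Koppelman identity from Section~\ref{intformelsektion} to the weight $G^{\epsilon}\wedge g$, split $G^{\epsilon}=HR^{\epsilon}+f_1(z)HU^{\epsilon}$, and absorb the $f_1(z)$-terms into $\J\psi$ since the entries of $f_1$ are sections of $\J$. Your added remarks on smoothness (via the integrable diagonal singularity of $B$ and smoothness of $R^{\epsilon},U^{\epsilon}$) and on pulling $f_1(z)$ through $\debar_z$ and the $\zeta$-integral simply make explicit what the paper leaves implicit.
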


\begin{proof}
Since $G^{\epsilon}\wedge g=HR^{\epsilon}\wedge g + f_1(z)HU^{\epsilon}\wedge g$ is a weight with respect
to $D'\times D$ and the entries of $f_1$ are sections of $\J$,  
it follows from Section~\ref{intformelsektion} that the claimed equality holds in $D$. 
Since $z\mapsto g(\zeta,z)$ has compact support in $D$, $\psi$ as well as the integrals on the right-hand side
are test forms in $D$.
\end{proof}

Let $\mu$ be an arbitrary current in $D$. Since $z$ and $\zeta-z$ are independent variables in $D\times D$,
the tensor product $B\wedge\mu(z)$ is well-defined, cf.\ \cite[Theorem~5.1.1]{Horm}. 
Let $\pi\colon D_{\zeta}\times D_z \to D_{\zeta}$ be the natural projection.
Then, since $z\mapsto g(\zeta,z)$ has compact support, 
\begin{equation*}
\pi_* \, (HR^{\epsilon}\wedge g \wedge B)_{N,N-1} \wedge \mu(z)  \quad \text{and} \quad
\pi_*\, (HR^{\epsilon}\wedge g)_{N,N} \wedge\mu(z)
\end{equation*}
are 
well-defined current in $D'_{\zeta}$. For notational convenience we will often write $\int_z \tau$ instead
of $\pi_*\tau$ for a current $\tau$ in $D\times D$.

\begin{lemma}\label{Koppel-curr}
Let $\mu$ be a current in $D$ such that $\J\mu=0$. Then, for all $\epsilon>0$,
\begin{eqnarray}\label{berk}
\mu(\zeta) &=& \int_z (HR^{\epsilon}\wedge g \wedge B)_{N,N-1} \wedge \debar\mu(z) +
\debar_{\zeta} \int_z (HR^{\epsilon}\wedge g \wedge B)_{N,N-1} \wedge \mu(z) \nonumber \\
& &
+ \int_z (HR^{\epsilon}\wedge g)_{N,N} \wedge\mu(z)
\end{eqnarray}
holds in $D'$.
\end{lemma}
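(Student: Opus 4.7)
The plan is to dualize Lemma~\ref{Koppel-test}: since both sides of \eqref{berk} are currents on $D'$, it suffices to verify the identity after pairing with an arbitrary test form $\varphi(\zeta)$ on $D'$ and then invoke Lemma~\ref{Koppel-test} applied to that same $\varphi$. I abbreviate $k(\zeta,z):=(HR^{\epsilon}\wedge g\wedge B)_{N,N-1}$ and $p(\zeta,z):=(HR^{\epsilon}\wedge g)_{N,N}$ and regard $k\wedge\mu(z)$ and $p\wedge\mu(z)$ as pseudomeromorphic currents on $D_{\zeta}\times D_z$; this is legitimate by the almost semi-meromorphic calculus of Section~\ref{PMsektion}, since for fixed $\epsilon>0$ the factor $HR^{\epsilon}\wedge g$ is smooth and only $B$ carries a (diagonal) singularity.

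The next step is a Fubini computation. Because $z\mapsto g(\zeta,z)$ has compact support in $D$, the projection $\pi\colon D_{\zeta}\times D_z\to D_{\zeta}$ is proper on $\textrm{supp}(k\wedge\mu)$ and $\textrm{supp}(p\wedge\mu)$, so $\int_z k\wedge\mu$ and $\int_z p\wedge\mu$ are well-defined currents on $D'_{\zeta}$; the adjoint relation $\langle\pi_*T,\varphi\rangle=\langle T,\pi^*\varphi\rangle$ then yields
\begin{align*}
\bigl\langle {\textstyle\int_z} k\wedge\mu,\varphi\bigr\rangle &= \pm\bigl\langle \mu(z),{\textstyle\int_{\zeta}} k(\zeta,z)\wedge\varphi(\zeta)\bigr\rangle, \\
\bigl\langle {\textstyle\int_z} k\wedge\debar\mu,\varphi\bigr\rangle &= \pm\bigl\langle \debar\mu(z),{\textstyle\int_{\zeta}} k(\zeta,z)\wedge\varphi(\zeta)\bigr\rangle, \\
\bigl\langle {\textstyle\int_z} p\wedge\mu,\varphi\bigr\rangle &= \pm\bigl\langle \mu(z),{\textstyle\int_{\zeta}} p(\zeta,z)\wedge\varphi(\zeta)\bigr\rangle,
\end{align*}
and, by the definition of $\debar$ on currents,
$$\bigl\langle \debar_{\zeta}{\textstyle\int_z} k\wedge\mu,\varphi\bigr\rangle = \pm\bigl\langle \mu(z),{\textstyle\int_{\zeta}} k(\zeta,z)\wedge\debar\varphi(\zeta)\bigr\rangle.$$
The inner $\int_{\zeta}$-integrals appearing on the right are precisely the test forms on $D$ produced by Lemma~\ref{Koppel-test}.

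To finish, I apply Lemma~\ref{Koppel-test} to replace $\varphi(z)$ by the sum of those three test forms together with a term of the form $\J\psi(z)$, pair the resulting identity against $\mu(z)$, and use that $\langle\mu,\J\psi\rangle=0$ because $\J\mu=0$, together with $\langle\mu(z),\debar_z\int_{\zeta}k\wedge\varphi\rangle=\pm\langle\debar\mu(z),\int_{\zeta}k\wedge\varphi\rangle$. Matching the result with the Fubini identities above shows that $\langle\mu,\varphi\rangle$ equals the pairing of the right-hand side of \eqref{berk} against $\varphi$; since $\varphi$ was arbitrary, \eqref{berk} holds as an identity of currents on $D'$.

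The main obstacle is justifying the Fubini/adjoint manipulations and the commutation of $\debar_{\zeta}$ with $\pi_*$ for the pseudomeromorphic products $k\wedge\mu(z)$ and $p\wedge\mu(z)$ across the diagonal singularity of $B$; once one is entitled to these exchanges, which is provided by the calculus recalled in Section~\ref{PMsektion} (see also \cite[Section~2]{AS}), the remainder is a routine bookkeeping in signs and bidegrees.
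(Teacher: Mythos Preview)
Your overall strategy---dualize Lemma~\ref{Koppel-test} against a test form $\varphi$ on $D'$, move the $\zeta$-integration inside, and kill the $\J\psi$ term with $\J\mu=0$---is exactly the paper's proof. The structure and the use of Lemma~\ref{Koppel-test} are identical.

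However, the technical justification you give for the ``Fubini/adjoint manipulations'' is off. The lemma is stated for an \emph{arbitrary} current $\mu$ with $\J\mu=0$, not a pseudomeromorphic one, so the almost semi-meromorphic calculus of Section~\ref{PMsektion} is not available: that machinery lets you multiply an almost semi-meromorphic current by a \emph{pseudomeromorphic} current, and your $\mu$ need not be of that type. The paper's point, made in the paragraph just before the lemma, is that $B$ is a current in the variables $(\zeta-z)$ alone and $\mu(z)$ is a current in $z$ alone; since $z$ and $\zeta-z$ are independent coordinates on $D\times D$, the product $B\wedge\mu(z)$ is a genuine tensor product of currents in the sense of \cite[Theorem~5.1.1]{Horm}, and that same theorem supplies the Fubini identity
\[
\Big\langle \pi_*\big(k\wedge\mu(z)\big),\,\varphi(\zeta)\Big\rangle
= \Big\langle \mu(z),\, \int_{\zeta} k(\zeta,z)\wedge\varphi(\zeta)\Big\rangle
\]
without any appeal to pseudomeromorphic calculus. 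Once you replace your Section~\ref{PMsektion} reference by this tensor-product argument, your proof coincides with the paper's.
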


\begin{proof}
Let $\varphi$ be a test form in $D'$. The action of the current 
$\int_z (HR^{\epsilon}\wedge g \wedge B)_{N,N-1} \wedge \debar\mu(z)$ on $\varphi(\zeta)$ is by definition the action 
of $(HR^{\epsilon}\wedge g \wedge B)_{N,N-1} \wedge \debar\mu(z)$ on $\varphi(\zeta)\otimes 1$ and this equals,
by \cite[Theorem~5.1.1.]{Horm}, the action of $\debar\mu(z)$ on $\int_{\zeta} (HR^{\epsilon}\wedge g \wedge B)_{N,N-1} \wedge \varphi(\zeta)$.
Hence,
\begin{equation*}
\left(\int_z (HR^{\epsilon}\wedge g \wedge B)_{N,N-1} \wedge \debar\mu(z)\right). \varphi(\zeta)=
\pm \mu(z). \left(\debar_z\int_{\zeta} (HR^{\epsilon}\wedge g \wedge B)_{N,N-1} \wedge \varphi(\zeta)\right).
\end{equation*}
Similar formulas hold for the second and third term on the right-hand side of \eqref{berk} and the lemma thus follows from
Lemma~\ref{Koppel-test}.
\end{proof}

\begin{definition}\label{Poperator}
For a current $\mu$ in $D$ we let $\check{\Proj}\mu$ be the current in $D'$ given by
\begin{equation*}
\check{\Proj}\mu (\zeta) = \int_z (HR\wedge g)_{N,N}\wedge \mu(z).
\end{equation*} 
\end{definition}
Notice, since $R=R(\zeta)$, that $(HR\wedge g)_{N,N}\wedge \mu(z)$ is well-defined as a tensor product in $D'\times D$.
Notice also that $\check{\Proj}$ maps arbitrary currents in $D$ to pseudomeromorphic 
currents in $D'$ annihilated by $\J$. 

The operator $\check{\Proj}$ of Theorems~\ref{ny} and \ref{egenDSthm2} corresponds to a choice of weight $g$ such that
$\zeta\mapsto g(\zeta,z)$ is holomorphic, but in the definition above we do not require this.

\begin{proposition}\label{fundprop}
Suppose that the weight $g$ in the definition of $\check{\Proj}$ depends holomorphically on $\zeta$.
Then, for any $(p,q)$-current $\mu$ in $D$, $\check{\Proj}\mu$ is of the form $\xi\cdot R_q^0$, where
\begin{equation*}
\xi=\Phi(\mu)=\int_z H^0_q\wedge g_{N-q}\wedge \mu(z)
\end{equation*}
is a holomorphic $p$-form with values in $E_q^*$.
Moreover, if $\J\mu=0$ then $\check{\Proj}\debar\mu=\debar\check{\Proj}\mu$ and $\Phi(\debar\mu)=f_{q+1}^*\Phi(\mu)$,
where $f_{q+1}^*$ is the transpose of the map $f_{q+1}\colon E_{q+1}\to E_q$.
\end{proposition}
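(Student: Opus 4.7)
The plan is to establish the three assertions in turn. First I would show $\check{\Proj}\mu = \Phi(\mu)\cdot R_q^0$ by a bidegree decomposition of the kernel $(HR\wedge g)_{N,N}$. Then I would deduce $\check{\Proj}\debar\mu = \debar\check{\Proj}\mu$ directly from Lemma~\ref{Koppel-curr}. Finally I would extract $\Phi(\debar\mu) = f_{q+1}^*\Phi(\mu)$ by computing $\debar\check{\Proj}\mu$ in two ways, using the residue identity $\debar R_q^0 = f_{q+1}R_{q+1}^0$ which arises as the $E_q$-component of $\nabla_{\textrm{End}}R = 0$.

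For the first step, write
\[
(HR\wedge g)_{N,N} \;=\; \sum_{k}\, H_k^0\,R_k^0\wedge g_{N-k}.
\]
Since $H$ is holomorphic in both variables and $g$ is holomorphic in $\zeta$ by hypothesis, the only $d\bar\zeta$-differentials in $(HR\wedge g)_{N,N}\wedge\mu(z)$ come from $R_k^0$. The projection $\int_z$ extracts the $z$-bidegree $(N,N)$ part of the integrand, which, given that $\mu$ contributes $(p,q)$ in $z$, forces $(HR\wedge g)_{N,N}$ to have $\zeta$-bidegree $(p,q)$; the $\bar\zeta$-count $q$ then forces $k=q$, and the remaining $(N-q,N-q)$-bidegree selects $g_{N-q}$. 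Since $R_q^0$ is a current in $\zeta$ alone, it factors through the $z$-integration and one obtains
\[
\check{\Proj}\mu(\zeta) \;=\; \Phi(\mu)(\zeta)\cdot R_q^0(\zeta), \qquad \Phi(\mu) = \int_z H_q^0\wedge g_{N-q}\wedge\mu(z).
\]
That $\Phi(\mu)$ is a holomorphic $(p,0)$-form in $\zeta$ with values in $E_q^*$ is then immediate from the corresponding properties of the kernel $H_q^0\wedge g_{N-q}$: the $E_q^*$-values come from $H_q^0$, and holomorphy in $\zeta$ follows because $\partial_{\bar\zeta}$ annihilates the kernel and commutes with the $z$-integration.

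For the second step, sections of $\J$ are holomorphic, so $\J\debar\mu = \debar(\J\mu) = 0$, and Lemma~\ref{Koppel-curr} applies to $\debar\mu$ as well. Applying $\debar_\zeta$ to the Koppelman identity for $\mu$ and subtracting the Koppelman identity for $\debar\mu$ yields $\debar\check{\Proj}^\epsilon\mu = \check{\Proj}^\epsilon\debar\mu$ on $D'$, and letting $\epsilon\to 0$ gives $\debar\check{\Proj}\mu = \check{\Proj}\debar\mu$. For the third step, since $\Phi(\mu)$ is holomorphic, $\debar\bigl(\Phi(\mu)\cdot R_q^0\bigr) = \pm\,\Phi(\mu)\cdot f_{q+1}R_{q+1}^0 = \pm\,\bigl(f_{q+1}^*\Phi(\mu)\bigr)\cdot R_{q+1}^0$, while Step~1 applied to $\debar\mu$ yields $\check{\Proj}\debar\mu = \Phi(\debar\mu)\cdot R_{q+1}^0$; comparing via Step~2 gives $\Phi(\debar\mu) = f_{q+1}^*\Phi(\mu)$. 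The main obstacle I expect is the careful bidegree and sign bookkeeping in Step~1 (showing that only $k=q$ survives and that the $E_q^*$-pairing is the right one); once that is settled, Steps~2 and~3 are essentially formal.
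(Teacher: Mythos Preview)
Your Step~1 is the paper's argument, and your Step~2 is a legitimate alternative: the paper does not use the Koppelman identity here, but your subtraction argument with Lemma~\ref{Koppel-curr} works. The paper instead proves both $\check{\Proj}\debar\mu=\debar\check{\Proj}\mu$ and $\Phi(\debar\mu)=f_{q+1}^*\Phi(\mu)$ via a single kernel identity (its Lemma~\ref{keylemma}),
\[
f_{q+1}^*(H^0_q)^*\wedge g_{N-q}=\debar\bigl((H^0_{q+1})^*\wedge g_{N-q-1}\bigr)+(f_1(z)H^1_{q+1})^*\wedge g_{N-q},
\]
obtained from the Hefer relation $\delta_\eta H^0_{q+1}=H^0_qf_{q+1}-f_1(z)H^1_{q+1}$ together with $\nabla_\eta g=0$.

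Step~3, however, has a real gap. From Steps~1 and~2 you only obtain $\bigl(\Phi(\debar\mu)-f_{q+1}^*\Phi(\mu)\bigr)\cdot R_{q+1}^0=0$, and you then cancel $R_{q+1}^0$. But the map $\xi\mapsto\xi\cdot R_{q+1}^0$ on holomorphic $E_{q+1}^*$-valued $p$-forms is not injective. Already for $\J=\langle z_1,z_2\rangle$ in $\C^N$ with the Koszul resolution and $q+1=\kappa=2$ one has $E_2\simeq\hol$, $R_2^0=\debar(1/z_2)\wedge\debar(1/z_1)$, and $\xi\cdot R_2^0=0$ for every holomorphic $\xi$ in $\langle z_1,z_2\rangle$. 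So your comparison yields no more than $\Phi(\debar\mu)-f_{q+1}^*\Phi(\mu)\in\ker(\,\cdot\,R_{q+1}^0)$, which is strictly weaker than the asserted equality. The paper's route avoids this entirely: integrating the displayed kernel identity against $\mu(z)$, the $\debar$-term becomes $\Phi(\debar\mu)$ after integration by parts, the $f_1(z)$-term vanishes because $\J\mu=0$, and one reads off $f_{q+1}^*\Phi(\mu)=\Phi(\debar\mu)$ directly, with no cancellation of $R$ required.
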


For the rest of this section we will fix frames for the trivial $E_j$-bundles and associated dual frames for the $E^*_j$'s;
sections of the $E_j$'s will be represented by column vectors, sections of the $E^*_j$'s by row vectors, and maps between bundles
by matrices. Notice that $E_0$ is assumed to be the trivial line bundle so that $\text{Hom}(E_j,E_0)\simeq E_j^*$.
Recall that $f_k=f_k(\zeta)$.
To prove Proposition~\ref{fundprop} we will need

\begin{lemma}\label{keylemma} Letting $(\cdot)^*$ denote matrix transpose we have
\begin{equation*}
f^*_{q+1}(H^0_q)^*\wedge g_{N-q} = \debar \big((H^0_{q+1})^*\wedge g_{N-q-1}\big) + (f_1(z)H^1_{q+1})^*\wedge g_{N-q}.
\end{equation*}
\end{lemma}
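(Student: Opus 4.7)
The proof is a short computation combining three ingredients: a bidegree vanishing, the weight condition on $g$, and the transposed Hefer relation for $H$.

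First I would observe that $(H^0_{q+1})^* \wedge g_{N-q}=0$ for purely dimensional reasons: both factors have their $(1,0)$-parts in $\mathrm{span}(d\eta_1,\ldots,d\eta_N)$, with $d\eta$-degrees $q+1$ and $N-q$ respectively, so the total $d\eta$-degree is $N+1>N$ and the wedge vanishes in $\Lambda_\eta$. Applying $\delta_\eta$ to this vanishing and using that $\delta_\eta$ is an odd derivation yields
\begin{equation*}
(\delta_\eta(H^0_{q+1})^*) \wedge g_{N-q} + (-1)^{q+1}(H^0_{q+1})^* \wedge \delta_\eta g_{N-q} = 0.
\end{equation*}

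Next I would rewrite each of the two $\delta_\eta$ factors. The weight condition $\nabla_\eta g=0$, read off in bidegree $(N-q-1,N-q)$, gives $\delta_\eta g_{N-q}=\debar g_{N-q-1}$; combined with the holomorphicity $\debar(H^0_{q+1})^*=0$, a second Leibniz step identifies $(H^0_{q+1})^* \wedge \debar g_{N-q-1}$ with $(-1)^{q+1}\debar\bigl((H^0_{q+1})^*\wedge g_{N-q-1}\bigr)$. On the other hand, transposing the Hefer relation $\delta_\eta H^0_{q+1}=H^0_q f_{q+1}-f_1(z)H^1_{q+1}$ gives $\delta_\eta(H^0_{q+1})^*=f_{q+1}^*(H^0_q)^*-(f_1(z)H^1_{q+1})^*$; this is legitimate because $\delta_\eta$ acts only on the form part while transposition acts only on the $\mathrm{Hom}$-part, so the two operations commute. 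Substituting these expressions into the displayed Leibniz identity and rearranging produces the statement of the lemma.

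The main obstacle I expect is routine but delicate sign bookkeeping: each Leibniz step introduces a $(-1)^{q+1}$ from the odd derivations $\delta_\eta$ and $\debar$, and one must also verify that no further signs enter from the convention $(AB)^*=B^*A^*$ for matrix-valued forms of positive form degree. Once these signs are shown to reinforce into the $+$ signs displayed in the lemma, the proof reduces to the one-line identity above; the geometric content is entirely captured by the three steps (dimension vanishing, weight condition, transposed Hefer relation).
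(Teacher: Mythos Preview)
Your proof is correct and follows essentially the same route as the paper's: both arguments hinge on the degree-$(N+1)$ vanishing of $H^0_{q+1}\wedge g_{N-q}$ in $\Lambda_\eta$, the weight identity $\delta_\eta g_{N-q}=\debar g_{N-q-1}$, the Hefer relation, and the holomorphicity of $H^0_{q+1}$. The only cosmetic difference is that the paper runs the computation forward starting from $f_{q+1}^*(H^0_q)^*\wedge g_{N-q}$ and writes $\pm$ rather than tracking the Leibniz signs explicitly.
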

\begin{proof}
This is verified by the following computation.
\begin{eqnarray*}
f^*_{q+1}(H^0_q)^*\wedge g_{N-q} &=& (H^0_qf_{q+1})^*\wedge g_{N-q} \\
&=&
(\delta_{\eta} H^0_{q+1} + f_1(z)H^1_{q+1})^*\wedge g_{N-q} \\
&=&
(\delta_{\eta}H^0_{q+1}\wedge g_{N-q})^* + (f_1(z)H^1_{q+1})^*\wedge g_{N-q} \\
&=&
(\pm H^0_{q+1}\wedge \delta_{\eta} g_{N-q})^* + (f_1(z)H^1_{q+1})^*\wedge g_{N-q} \\
&=&
(\pm H^0_{q+1}\wedge \debar g_{N-q-1})^* + (f_1(z)H^1_{q+1})^*\wedge g_{N-q} \\
&=&
\debar (H^0_{q+1}\wedge g_{N-q-1})^* + (f_1(z)H^1_{q+1})^*\wedge g_{N-q},
\end{eqnarray*}
where the second equality follows from the properties of the Hefer morphisms (see Example~\ref{divvikt1}),
the forth since $0=\delta_{\eta}(H^0_{q+1}\wedge g_{N-q})$
for degree reasons, the fifth since $g$ is a weight, and the sixth since $H^0_{q+1}$ is holomorphic.
\end{proof}

\begin{proof}[Proof of Proposition~\ref{fundprop}]
Since $g$ is holomorphic in $\zeta$ and in particular contains no $d\bar{\zeta}$-differentials,
it follows for degree reasons that 
\begin{equation*}
\check{\Proj}\mu(\zeta)=\int_zH^0_qR_q^0\wedge g_{N-q}\wedge\mu(z)
=\pm (R^{0}_q)^*\int_z(H^0_q)^*\wedge g_{N-q}\wedge\mu(z).
\end{equation*}
Since also $H$ is holomorphic in $\zeta$ the first statement of the proposition follows.
For the rest of this proof we will write $R_j$ instead of $R_j^0$.

Assume that $\J\mu=0$. To see that $\check{\Proj}\debar\mu=\debar\check{\Proj}\mu$ we compute:
\begin{eqnarray*}
\check{\Proj}\debar\mu &=& \int_zH^0_{q+1}R_{q+1}\wedge g_{N-q-1}\wedge\debar\mu(z) \\
&=&
\pm R^*_{q+1}\int_z \debar \big((H^0_{q+1})^*\wedge g_{N-q-1}\big)\wedge\mu(z) \\
&=&
\pm R^*_{q+1} \int_z f^*_{q+1}(H^0_q)^*\wedge g_{N-q}\wedge \mu(z)
\mp R^*_{q+1} \int_z (f_1(z)H^1_{q+1})^*\wedge g_{N-q}\wedge \mu(z) \\
&=&
\pm (f_{q+1}R_{q+1})^* \int_z (H^0_q)^*\wedge g_{N-q}\wedge \mu(z) \\
&=&
\debar \left(R^*_{q} \int_z (H^0_q)^*\wedge g_{N-q}\wedge \mu(z) \right) =\debar \check{\Proj}\mu,
\end{eqnarray*}
where the second equality holds since $R=R(\zeta)$ is independent of $z$, the third by Lemma~\ref{keylemma},
the forth since (the entries of) $f_1(z)$ annihilate $\mu(z)$, and the fifth since
$\nabla_{\text{End}} R=0$ (see Section~\ref{URsektion}) and $H$ and $g$ are holomorphic in $\zeta$.

Moreover, in view of Lemma~\ref{keylemma}, we get that
\begin{eqnarray*}
\Phi(\debar\mu)^* &=& \int_z \debar \big((H^0_{q+1})^*\wedge g_{N-q-1}\big)\wedge\mu(z) \\
&=& f_{q+1}^*\int_z (H^0_{q})^*\wedge g_{N-q}\wedge\mu(z) + 
\int_z\big(f_1(z)H^1_{q+1}\big)^*\wedge g_{N-q}\wedge\mu(z) \\
&=& f_{q+1}^*\Phi(\mu)^*
\end{eqnarray*}
since (the entries of) $f_1(z)$ annihilate $\mu(z)$.
\end{proof}

Let $\mu\in\PM(D)$. Since $B$ is almost semi-meromorphic in $D\times D$, the product $B\wedge \mu(z)$ 
is a well-defined pseudomeromorphic current in $D\times D$, in view of Section~\ref{PMsektion};
by the uniqueness in \cite[Theorem~5.1.1.]{Horm},
$B\wedge \mu(z)$ coincides with the tensor product of $B$ and $\mu$ ($z$ and $\zeta-z$ are independent variables).
From Section~\ref{URsektion}, $R$ is an almost semi-meromorphic current plus $\debar$ of such a current. 
Thus, by Section~\ref{PMsektion}, 
$R\wedge B\wedge \mu(z)$ is a well-defined
pseudomeromorphic current in $D'\times D$ that can be defined as
$\lim_{\epsilon\to 0}R^{\epsilon}\wedge B \wedge\mu(z)$, where $R^{\epsilon}$ is the regularization of $R$ given by \eqref{Rreg}.
Even though $R^0_j=0$ for $j<\kappa$
it may be the case that $\lim_{\epsilon\to 0}R^{0,\epsilon}_j\wedge B \wedge\mu(z)\neq 0$.
Still, the support of $R\wedge B\wedge\mu(z)$ is contained in $Y\times \text{supp}\,\mu$.
To see this it suffices, in view of \eqref{Rreg} and Section~\ref{PMsektion}, to see that $\ett_{Y\times D} B\wedge\mu(z)=0$.
Since $B$ is smooth outside of the diagonal $\Delta\subset D\times D$ it is clear that 
$\text{supp}\,\ett_{Y\times D} B\wedge\mu(z)\subset \Delta$. Moreover, $ZSS(B)\subset \Delta$ and so 
$\ett_{\Delta}B\wedge\mu(z)=0$. Hence,
\begin{equation*}
\ett_{Y\times D} B\wedge\mu(z)=\ett_{\Delta}\ett_{Y\times D} B\wedge\mu(z)
=\ett_{Y\times D} \ett_{\Delta}B\wedge\mu(z)=0.
\end{equation*}

\begin{definition}\label{Koperator}
For a pseudomeromorphic current $\mu$ in $D$ we let $\check{\K}\mu$ be the pseudomeromorphic current in $D'$ given by
\begin{equation*}
\check{\K}\mu (\zeta) = \int_z (HR\wedge g\wedge B)_{N,N-1}\wedge \mu(z).
\end{equation*} 
\end{definition}
As in the definition of the $\check{\Proj}$-operators, we do not require $g$ to be holomorphic in $\zeta$,
but the $\check{\K}$-operator of Theorem~\ref{egenDSthm2} corresponds to such a choice.

Notice that, since $\text{supp}\,R\wedge B \wedge\mu(z)\subset Y\times D$,
$\check{\K}$ maps pseudomeromorphic currents in $D$ to pseudomeromorphic currents in $D'$ with
support contained in $Y$. We do not know whether or not $\J\check{\K}\mu=0$ for a general pseudomeromorphic $\mu$.

The following proposition follows from Lemma~\ref{Koppel-curr} by letting $\epsilon\to 0$.

\begin{proposition}\label{DSweak}
For any $\mu\in\PM^{p,q}(D)$ such that $\J\mu=0$ 
we have $\mu_{\restriction_{D'}}=\debar\check{\K}\mu + \check{\K}\debar\mu + \check{\Proj}\mu$.
\end{proposition}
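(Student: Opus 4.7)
The plan is to take the identity of Lemma~\ref{Koppel-curr}, which holds for every $\epsilon>0$, and let $\epsilon\to 0$ term by term, recognizing each limit as one of the pieces in the desired formula.

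First I would observe that $\J\debar\mu=0$ as well, since $\J$ is holomorphic and $\J\mu=0$; this ensures that $\debar\mu$ is a legitimate input for $\check{\K}$. The heart of the argument is the convergence
$$R^{\epsilon}\wedge B\wedge\mu(z)\ \longrightarrow\ R\wedge B\wedge\mu(z),\qquad R^{\epsilon}\wedge\mu(z)\ \longrightarrow\ R\wedge\mu(z),$$
as pseudomeromorphic currents in $D'\times D$. The first of these is precisely how $R\wedge B\wedge\mu(z)$ is defined in the paragraph preceding Definition~\ref{Koperator}; the second is an easier variant without the factor $B$ and follows from the almost semi-meromorphic-plus-$\debar$ structure of $R$ together with the discussion in Section~\ref{PMsektion}.

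Once these convergences are in hand, the remaining steps are routine. Wedging with the smooth form $H\wedge g$ preserves the limits, and since $z\mapsto g(\zeta,z)$ has compact support, the push-forward $\pi_*$ along the projection to $D'_{\zeta}$ commutes with the limits. Finally, $\debar_{\zeta}$ is continuous on currents, so it commutes with the limit as well. Identifying the three resulting terms with $\check{\K}\debar\mu$, $\debar_{\zeta}\check{\K}\mu$, and $\check{\Proj}\mu$ respectively gives the desired formula in $D'$.

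The main obstacle, such as it is, lies in the convergence of $R^{\epsilon}\wedge B\wedge\mu(z)$ --- the product of two singular factors. But this has already been addressed in the paragraph preceding Definition~\ref{Koperator} via the pseudomeromorphic framework of \cite{AW3}, and we may simply invoke it here; no new analytic work is required, so this proposition is in effect a verification rather than a new construction.
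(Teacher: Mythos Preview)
Your proposal is correct and follows exactly the paper's approach: the paper's proof is the single sentence ``follows from Lemma~\ref{Koppel-curr} by letting $\epsilon\to 0$,'' and you have simply unpacked why each of the three terms converges to the corresponding piece of the formula. Your justification of the key convergence $R^{\epsilon}\wedge B\wedge\mu(z)\to R\wedge B\wedge\mu(z)$ via the paragraph preceding Definition~\ref{Koperator} is precisely what the paper intends.
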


\begin{proposition}\label{objective1}
Let $\check{\Proj}$ be an operator associated to $\J$ and corresponding to a weight $g$ such that $\zeta\mapsto g(\zeta,z)$ is holomorphic.
If $\nu\in\CH^p_{\J}(D)$, then $\check{\Proj}\nu=\nu_{\restriction_{D'}}$. 
Moreover, if $\mu\in\mathscr{C}_{\J}^{p,\kappa}(D)$ is $\debar$-closed, 
then $\check{\Proj}\mu\in\CH_{\J}^p(D')$ and $\check{\Proj}\mu$ is the current $\nu$ in \eqref{grund}.
\end{proposition}

\begin{proof}
By Proposition~\ref{DSweak}, $\nu_{\restriction_{D'}}=\debar\check{\K}\nu + \check{\Proj}\nu$. However,
$\check{\K}\nu$ is a pseudomeromorphic $(p,\kappa-1)$-current with support contained in $Y$ and must 
thus vanish in view of the Dimension principle.

For the second statement, notice that $\check{\Proj}\mu$ is a pseudomeromorphic $(p,\kappa)$-current
annihilated by $\J$ (in particular with support in $Y$)
and, by Proposition~\ref{fundprop}, $\debar\check{\Proj}\mu=0$. Thus, in view of Section~\ref{PMsektion},
$\check{\Proj}\mu$ is a section of $\CH_{\J}^p$. Now consider the decomposition \eqref{grund} and recall that $\J\nu=0$ and 
that we may assume that $\J\tau=0$. By the first part of the proof and Proposition~\ref{fundprop}
\begin{equation*}
\check{\Proj}\mu= \check{\Proj}\nu + \check{\Proj}\debar\tau=
\nu + \debar\check{\Proj}\tau.
\end{equation*} 
Together with \eqref{grund} this gives that 
$\mu-\check{\Proj}\mu=\debar(\tau-\check{\Proj}\tau)$, and so by uniqueness of \eqref{grund}, $\check{\Proj}\mu=\nu$.
\end{proof}

\begin{proof}[Proof of Theorem~\ref{ny}]
Setting $P(\zeta,z):=H^0_{\kappa}\wedge g_{N-\kappa}$, where $g$ is a weight depending holomorphically on $\zeta$, 
Theorem~\ref{ny} follows from Propositions~\ref{fundprop}
and \ref{objective1}.
\end{proof}


\section{The sheaf $\mathscr{B}_{\mathcal{J}}^{p,q}$ and proof of Theorems~\ref{egenDSthm1} and \ref{egenDSthm2}}\label{Bsektion}
Let $M$ be an $N$-dimensional complex manifold and $\J\subset\hol_M$ a coherent ideal sheaf as before.
In analogy with \cite[Definition~7.1]{AS}, \cite[Definition~4.1]{RSWSerre}, and \cite[Section~6.2]{holoforms} we make the following definition. 
\begin{definition}
A $(p,q)$-current $\mu$ on $M$ is a section of $\mathscr{B}_{\J}^{p,q}$ over an open set $\mathcal{U}\subset M$ 
if for each $x\in \mathcal{U}$ there is a neighborhood $D$ of $x$ such that
$\mu_{\restriction_D}$ is a finite sum of currents of the form
\begin{equation}\label{kram}
\xi_m\wedge \check{\K}_m (\cdots \xi_1\wedge\check{\K}_1(\xi_0\cdot R) \cdots),
\end{equation}
where $R$ is a current corresponding to a Hermitian free resolution $(\hol(E_{\bullet}),f_{\bullet})$ of $\hol/\J$
in $D$,
$\xi_0$ is a smooth form with values in $\oplus_qE^*_q$, $\xi_j$ is a smooth form for $j\geq 1$, and the 
$\check{\K}_j$'s are operators as in Definition~\ref{Koperator}.
\end{definition}

It is clear from the definition that $\mathscr{B}_{\J}:=\oplus_{p,q}\mathscr{B}_{\J}^{p,q}$ is a module over
the sheaf of smooth forms and that it is closed under $\check{\K}$-operators.
Moreover, any section of $\mathscr{B}_{\J}$ has support in $Y$ and so, in view of 
Section~\ref{PMsektion}, if $\mu\in \mathscr{B}_{\J}^{p,\kappa}$ is $\debar$-closed then $\mu\in\CH_Y^p$.
Notice also, by Section~\ref{intopsektion}, that a $\check{\Proj}$-operator associated with $\J$ 
maps an arbitrary current to a section of $\mathscr{B}_{\J}$
and that $\mathscr{B}_{\J}^{p,q}$ is a subsheaf of $\PM^{p,q}$.

\begin{example}\label{exempel}
Suppose that $\J$ is the radical ideal sheaf of a complex submanifold $i\colon Y\hookrightarrow M$ of codimension $\kappa$.
Then $\mathscr{B}_{\J}^{N,q}=i_* \mathscr{E}_Y^{N-\kappa,q-\kappa}$. 
This may be verified locally so we assume that $M$ is the unit ball in $\C^N$ with coordinates $(z;w)$
such that $Y=\{z=0\}$ and $\J=\langle z\rangle$. For notational convenience we will also assume that $\kappa=1$. Let $\varphi$ be a smooth 
$(N-1,q-1)$-form on $Y$ and let $\tilde{\varphi}$ be any smooth extension of $\varphi$ to $M$. 
Then $i_*\varphi=\tilde{\varphi}\wedge [Y]$.
From the free resolution $0\to \hol \stackrel{z\cdot}{\longrightarrow} \hol$ of $\hol/\J$ we get the current $R=\debar(1/z)$
and so, by the Poincar\'e-Lelong formula, $\tilde{\varphi}\wedge [Y]=i\tilde{\varphi}\wedge dz\wedge R/2\pi$. Hence,
$i_* \mathscr{E}_Y^{N-\kappa,q-\kappa}\subset \mathscr{B}_{\J}^{N,q}$.

For the opposite inclusion, let $\mu$ be a section of $\mathscr{B}_{\J}^{N,q}$; we may assume that $\mu$ is of the form \eqref{kram}.
We use induction over $m$ in \eqref{kram} to see that $\mu=\xi\wedge [Y]$ for some smooth $\xi$;
notice that such a $\xi$ exists if there is a continuous $\xi'$, which is smooth along $Y$, such that $\mu=\xi'\wedge [Y]$.
If $(\hol(E_{\bullet}),f_{\bullet})$ is any free Hermitian resolution of $\hol/\J$ with $\text{rank}\, E_0=1$, then the associated 
residue current is of the form $\alpha \wedge\debar(1/z)$, where $\alpha$ is a smooth $(0,*)$-form with values in $\oplus_j E_j$, see
\cite[Theorem~4.4]{AW1}. Hence, if $\mu$ is of the form \eqref{kram} with $m=0$, then $\mu=\xi_0\cdot \alpha\wedge\debar (1/z)$,
where $\xi_0$ is a smooth $(N,*)$-form with values in $\oplus_jE^*_j$. Writing $\xi_0\cdot \alpha=\xi\wedge dz$, for some scalar-valued 
smooth form $\xi$, we get $\xi_0\cdot \alpha\wedge\debar (1/z)=-2\pi i \xi\wedge [Y]$, and the induction start follows. 
Now, if $\check{\K}$ is an integral operator as in
Definition~\ref{Koperator}, then 
\begin{equation*}
\check{\K}(\xi\wedge [Y]) = \int_{z\in Y} (HR\wedge g\wedge B)_{N,N-1}\wedge \xi(z)=R\cdot \int_{z\in Y} k(\zeta,z)\wedge \xi(z),
\end{equation*}
where $R=R(\zeta)$ is of the form $\alpha(\zeta)\wedge\debar (1/\zeta)$ as above, and $k(\zeta,z)$ is a kernel in $M\times M$
which is $\mathcal{O}(1/|\zeta-z|^{2(N-1)-1})$. 
It follows that $\zeta\mapsto \int_{z\in Y} k(\zeta,z)\wedge \xi(z)$ is a continuous $(N,*)$-form with values in 
$\oplus_jE^*_j$ and we write it as $\psi(\zeta)\wedge d\zeta$. Moreover, in view of \cite[Lemma~6.2]{AS}, $\psi_{\restriction_Y}$ is smooth.
We get
\begin{equation*}
\check{\K}(\xi\wedge [Y])=\alpha\wedge\debar (1/\zeta)\cdot \psi\wedge d\zeta=
\pm 2\pi i\alpha\cdot \psi\wedge [Y],
\end{equation*}
and the induction step follows. \hfill \qed
\end{example}

To prove Theorems~\ref{egenDSthm1} and \ref{egenDSthm2} we may assume that $M=D\Subset \C^N$ is a pseudoconvex domain.
Let $D'\Subset D$ and let $D''\Subset D$ be a suitable neighborhood of $\overline{D}'$ as in the beginning of 
Section~\ref{intopsektion}. 
Let 
\begin{equation*}
0\to \hol(E_m)\stackrel{f_m}{\longrightarrow} \cdots \stackrel{f_1}{\longrightarrow} \hol(E_0) \to \hol/\J\to 0
\end{equation*}
be a Hermitian free resolution of $\hol/\J$ in $D''$
and let $U$ and $R$ be the associated currents, see Section~\ref{URsektion}. 
Dualizing and tensoring with $\Om^p:=\Om^p_M$ we get the complex
\begin{equation}\label{snoe}
0\leftarrow \hol(E_m^*)\otimes\Om^p \stackrel{f_m^*\otimes\text{Id}}{\longleftarrow} \cdots 
\stackrel{f_1^*\otimes\text{Id}}{\longleftarrow} \hol(E_0)\otimes\Om^p \leftarrow 0.
\end{equation}
It is well-known that the cohomology of this complex is isomorphic to $\Ext^{\bullet}(\hol_M/\J,\Om^p)$.
Recall that $\mathscr{C}^{p,q}$ is a stalk-wise injective $\hol_M$-module by a theorem of Malgrange and consider 
the resolution
\begin{equation*}
0\to \Om^p \to \mathscr{C}^{p,0} \stackrel{\debar}{\longrightarrow} \cdots \stackrel{\debar}{\longrightarrow}
\mathscr{C}^{p,N}\to 0
\end{equation*}
of $\Om^p$. Applying the functor $\Hom(\hol_M/\J,-)$ and noticing that 
$\Hom(\hol_M/\J,\mathscr{C}^{p,q})\simeq \mathscr{C}^{p,q}_{\J}$ we get the complex
\begin{equation}\label{snoeis}
0\to \mathscr{C}^{p,0}_{\J} \stackrel{\debar}{\longrightarrow} \cdots \stackrel{\debar}{\longrightarrow}
\mathscr{C}^{p,N}_{\J}\to 0.
\end{equation}
By standard homological algebra, the cohomology of \eqref{snoeis} is naturally isomorphic to the cohomology
of \eqref{snoe}.
Following \cite{MatsNoether} this isomorphism can be realized using the current $R$ as follows.
For convenience we will write just $f^*_k$ instead of $f^*_{k}\otimes \text{Id}$.
If $\xi$ is a holomorphic $p$-form with values in $E^*_q$ such that 
$f^*_{q+1} \xi=0$, then since $fR-\debar R=0$ (see Section~\ref{URsektion})
\begin{equation}\label{lunch}
\debar(\xi\cdot R_q) = \pm \xi\cdot\debar R_q=\pm \xi\cdot f_{q+1}R_{q+1}=\pm f^*_{q+1} \xi\cdot R_{q+1}=0.
\end{equation}
Similarly, if $\xi=f^*_{q} \xi'$, then $\xi\cdot R_q=0$. Thus the map $\hol(E_p^*)\otimes\Om^p\to\mathscr{C}^{p,q}_{\J}$
given by $\xi\mapsto \xi\cdot R_q$ induces a map on cohomology and it turns out to be the natural isomorphism.
Notice that $\xi\cdot R_q$ is a section of $\mathscr{B}_{\J}^{p,q}$.

\begin{proof}[Proof of Theorems~\ref{egenDSthm1} and \ref{egenDSthm2}]
We have already noticed that any $\check{\Proj}$-operator associated to $\J$ maps arbitrary currents to sections of $\mathscr{B}_{\J}$ and that 
any $\check{\K}$-operator associated to $\J$
maps sections of $\mathscr{B}_{\J}$ to sections of $\mathscr{B}_{\J}$.
As $\mathscr{B}_{\J}\subset \PM$ and sections of $\mathscr{B}_{\J}$ have support in $Y$ it follows from the Dimension principle
that $\overline{\sqrt{\J}} \mathscr{B}_{\J}=d \overline{\sqrt{\J}} \wedge \mathscr{B}_{\J}=0$.

Let us temporarily assume that $\J\mathscr{B}_{\J}^{p,q}=0$ and show how 
Theorems~\ref{egenDSthm1} and \ref{egenDSthm2} follow. Then the kernel of $\debar$ in $\mathscr{B}_{\J}^{p,\kappa}$
is $\CH_{\J}^p$ and, by Proposition~\ref{DSweak},
we have $\mu=\debar\check{\K}\mu+\check{\K}\debar\mu+\check{\Proj}\mu$ for any $\mu$ in $\mathscr{B}_{\J}$
if $\check{\Proj}$ and $\check{\K}$ are constructed using the same $R$, $H$, and $g$. Assume henceforth that $g$ depends
holomorphically on $\zeta$.

To show that $\debar$ maps $\mathscr{B}_{\J}$ to itself it suffices to show that if $\mu$ is of the form \eqref{kram} (with $\xi_m=1$)
then $\debar\mu$ is a section of $\mathscr{B}_{\J}$; we will use induction over $m$ to see this. The case $m=0$ follows since
$\nabla_{\text{End}}R=0$. Indeed, then $\mu=\xi\cdot R$ for some smooth $\xi$ and a computation similar to \eqref{lunch}
gives 
\begin{equation*}
\debar(\xi\cdot R)= 
\debar\xi\cdot R\pm f^*\xi\cdot R.
\end{equation*}
If $m>0$ we write $\mu=\check{\K}_m\mu'$, where $\mu'$ is of the form \eqref{kram} with $m$ replaced by $m-1$. 
By Proposition~\ref{DSweak} we get
\begin{equation*}
\debar\mu=\debar\check{\K}_m\mu'=\mu'-\check{\K}_m\debar\mu' - \check{\Proj}_m\mu',
\end{equation*}
and since $\debar\mu'$ is a section of $\mathscr{B}_{\J}$ by the induction hypothesis, it follows that $\debar\mu$ is a
section of $\mathscr{B}_{\J}$.

To see that the inclusion of complexes $(\mathscr{B}_{\J}^{p,\bullet},\debar)\hookrightarrow (\mathscr{C}_{\J}^{p,\bullet},\debar)$
is a quasi-isomorphism, consider the diagram
\begin{equation}\label{diagramm}
\xymatrix{
\mathscr{H}^{q}\big(\hol(E^*_{\bullet})\otimes \Om^p, \,f^*_{\bullet}\big) \ar[r]^{\Psi} \ar[dr] & 
\mathscr{H}^{q}\left(\mathscr{B}_{\J}^{p,\bullet},\,\debar \right) \ar[d] \\
& \mathscr{H}^{q}\left(\mathscr{C}^{p,\bullet}_{\J},\,\debar \right),
}
\end{equation}
where the diagonal map is the natural isomorphism and $\Psi$ is the map $\xi\mapsto \xi\cdot R_q$; cf.\ \cite[Section~7]{MatsNoether}.
It follows that $\Psi$ is injective and that the vertical map is surjective.
Let $\mu$ be a $\debar$-closed section of $\mathscr{B}_{\J}^{p,q}$ and suppose that
there is a $\tau\in\mathscr{C}_{\J}^{p,q-1}$ such that $\mu=\debar\tau$.
Then by Propositions~\ref{DSweak} and \ref{fundprop}
\begin{equation*}
\mu=\debar\check{\K}\mu + \check{\Proj}\mu=\debar(\check{\K}\mu + \check{\Proj}\tau).
\end{equation*}
Since $\check{\K}\mu + \check{\Proj}\tau$ is in $\mathscr{B}_{\J}$ it follows that the vertical map is injective
and so all maps in \eqref{diagramm} are isomorphisms.

Notice that $\debar\check{\Proj}\mu=\check{\Proj}\debar\mu$ for any $\mu\in\mathscr{B}_{\J}$ by Proposition~\ref{fundprop}.
Hence, $\check{\Proj}$ is a map of complexes $(\mathscr{C}_{\J}^{p,\bullet},\debar)\to (\mathscr{B}_{\J}^{p,\bullet},\debar)$.
Showing that it induces the inverse of the vertical map in \eqref{diagramm} is similar to the proof of Proposition~\ref{objective1}:
Let $\mu\in\mathscr{C}_{\J}^{p,q}$ be $\debar$-closed; we must show that $\check{\Proj}\mu$ is a $\debar$-closed section
of $\mathscr{B}_{\J}^{p,q}$ such that $\mu-\check{\Proj}\mu\in \debar\mathscr{C}_{\J}^{p,q-1}$. In view of 
Proposition~\ref{fundprop} the first part of this is clear. The vertical map in
\eqref{diagramm} is an isomorphism and so there is a $\debar$-closed $\nu\in\mathscr{B}_{\J}^{p,q}$ and a $\tau\in \mathscr{C}_{\J}^{p,q-1}$
such that \eqref{grund} holds. Then, by Proposition~\ref{DSweak}, 
$\nu=\check{\Proj}\nu+\debar\check{\K}\nu$ and so by Proposition~\ref{fundprop} we get 
\begin{equation*}
\check{\Proj}\mu=\check{\Proj}\nu+\check{\Proj}\debar\tau=\nu-\debar\check{\K}\nu + \debar\check{\Proj}\tau
\end{equation*}
by applying $\check{\Proj}$ to \eqref{grund}. Subtracting this from \eqref{grund} we get
\begin{equation*}
\mu-\check{\Proj}\mu=\debar(\tau-\check{\Proj}\tau + \check{\K}\nu).
\end{equation*}

To conclude the proof of Theorems~\ref{egenDSthm1} and \ref{egenDSthm2} it remains to show that $\J\mu=0$ for any section $\mu$
of $\mathscr{B}_{\J}^{p,q}$. 
We may assume that $\mu$ is of the form \eqref{kram}. In view of Section~\ref{PMsektion}, the product 
\begin{equation*}
T_{m,q}:=R_{k_m}(z_m)\wedge B_{\ell_m}(z_m,z_{m-1}) \wedge\cdots \wedge B_{\ell_1}(z_1,z_0)\wedge R_q(z_0),\quad
k_j+\ell_j\leq N,
\end{equation*}
is a well-defined pseudomeromorphic current in $D_{z_0}\times \cdots \times D_{z_m}$; here the $R$'s are currents corresponding to 
Hermitian resolutions of $\hol/\J$ and the $B$'s are Bochner-Martinelli type forms as in Section~\ref{intformelsektion}; $R_{k_j}$
is a component of the part of the $j^{\text{th}}$ $R$-current of bidegree $(0,k_j)$ and $B_{\ell_j}$ is the part of 
the $j^{\text{th}}$ Bochner-Martinelli type form of bidegree $(\ell_j,\ell_j-1)$.
In view of 
Definition~\ref{Koperator}, $\mu$ is a sum of push-forwards, under maps $D_{z_0}\times \cdots \times D_{z_m}\to D_{z_m}$, 
of $T_{m,q}$-currents times smooth forms, and therefore it is sufficient to
show that $\J T_{m,q}=0$ where $\J=\J(z_m)$; we will do this by double induction over $m$ and $q$. 
Since $T_{0,q}=R_q$ it is clear that $\J T_{0,q}=0$ for all $q$. Fix now $m>0$ and notice that $T_{m,q}$ has bidegree
$(\sum_{j=1}^m\ell_j, \sum_{j=1}^mk_j+\ell_j - m +q)$ and that $\sum_{j=1}^mk_j+\ell_j - m +q\leq mN-m+q$. 
From the paragraph preceding Definition~\ref{Koperator} it follows that the support of $T_{m,q}$ is contained in
$Y\times\cdots\times Y$ ($m+1$ copies). Moreover, if $z_{s+1}\neq z_s$ for some $0\leq s\leq m-1$,
then $B_{\ell_{s+1}}(z_{s+1},z_s)$ is smooth and so $T_{m,q}$ is a smooth form times the tensor product of two currents
$T_{s,q}$ and $T_{m-s-1,k_{s+1}}$. By induction over $m$ we have $\J T_{m-s-1,k_{s+1}}=0$. It follows that the support
of $\J T_{m,q}$ is contained in $\{z_0=\cdots =z_m\}\cap Y\times\cdots\times Y\simeq Y$, which has codimension
$mN+\kappa$ in $D_{z_0}\times \cdots \times D_{z_m}$. Thus, by the Dimension principle, $\J T_{m,q}=0$ if
$mN-m+q<mN+\kappa$, i.e., if $q<\kappa+m$. In particular, $\J T_{m,\kappa}=0$, which is the induction start
for showing $\J T_{m,q}=0$ by induction over $q$. Let $R=R(z_0)=R_{\kappa}(z_0)+R_{\kappa+1}(z_0)+\cdots$ be the $R$-current
associated to the Hermitian free resolution $(\hol(E_{\bullet}),f_{\bullet})$ as in Section~\ref{URsektion}.
From (the proof of) \cite[Theorem~4.4]{AW1} it follows that outside of the set $Y_k$ where $f_k$ does not have optimal rank,
there is a smooth $\text{Hom}(E_k,E_{k+1})$-valued $(0,1)$-form $\alpha_k=\alpha_k(z_0)$ such that $R_{k}=\alpha_kR_{k-1}$.
Moreover, since $(\hol(E_{\bullet}),f_{\bullet})$ is exact, it follows from the Buchsbaum-Eisenbud criterion that
$\text{codim}\, Y_k\geq k$, $k\geq 1$. Hence, for $z_0\notin Y_{\kappa+1}$ we have $T_{m,\kappa+1}=\alpha_{\kappa+1}T_{m,\kappa}$
and so the support of $\J T_{m,\kappa+1}$ must be contained in the set where $z_0\in Y_{\kappa+1}$. Since it also has support 
contained in $\{z_0=\cdots=z_m\}$ it must in fact have support contained in 
$\{z_0=\cdots =z_m\}\cap Y_{\kappa+1}\times\cdots\times Y_{\kappa+1}\simeq Y_{\kappa+1}$, which has codimension
$\geq mN+\kappa+1$ in $D_{z_0}\times \cdots \times D_{z_m}$. By the Dimension principle, then, $\J T_{m,\kappa+1}=0$.
Continuing in this way get that $\J T_{m,q}=0$ for all $q$.
\end{proof}

The proof shows that the map $\Psi$ of \eqref{diagramm} is an isomorphism. The injectivity followed since
the diagonal map in \eqref{diagramm} is an isomorphism which in turn relies on 
Malgrange's result that $\mathscr{C}^{p,q}$ is stalk-wise injective. However, both surjectivity and injectivity can be showed directly using the 
methods of this paper. To conclude the paper we sketch how this can be done. For the surjectivity of $\Psi$, let 
$\mu$ be a $\debar$-closed section of $\mathscr{B}_{\J}^{p,q}$. Then, by Propositions~\ref{DSweak} and \ref{fundprop},
$\mu=\xi\cdot R^0_q + \debar\check{\K}\mu$, where $f^*_{q+1}\xi=0$, and so the germ of a section of 
$\HH^q(\mathscr{B}_{\J}^{p,\bullet}, \debar)$
defined by $\mu$ is in the image of $\Psi$.

For the injectivity we will use a new kind of weight in our integral formulas to see that the map $\Phi$, 
defined in Proposition~\ref{fundprop}, induces a left inverse of $\Psi$. Notice that $\Phi$ indeed induces a map on cohomology
since $\Phi(\debar\mu)=f_{q+1}^*\Phi(\mu)$ for sections $\mu$ of $\mathscr{B}_{\J}^{p,q}$.
With the setup of Example~\ref{divvikt1}, consider
\begin{eqnarray*}
\check{G}_q^{\epsilon} &:=& \sum_{\ell=0}^q R^{\ell,\epsilon}_q(z)H^{\ell}_q +
\sum_{\ell=0}^{q-1}U^{\ell,\epsilon}_q(z)H^{\ell}_{q-1}f_q +
\sum_{\ell=0}^{q}f_{q+1}(z)U^{\ell,\epsilon}_{q+1}(z)H^{\ell}_q \\
&=:&
R^{\epsilon}_q(z)H_q + U^{\epsilon}_q(z)H_{q-1}f_q + f_{q+1}(z)U^{\epsilon}_{q+1}(z)H_q,
\end{eqnarray*}
which is a smooth section of $\Lambda_{\eta}\otimes \text{Hom}(E_q^{\zeta},E_q^z)$ for any $\epsilon >0$;
notice that $U^{\epsilon}(z)$ and $R^{\epsilon}(z)$ here depend on $z$.
One can check that $\check{G}_q^{\epsilon}$ satisfies the properties of being a weight, with the property
$\check{G}_{q,0}^{\epsilon}(z,z)=1$ construed as $\check{G}_{q,0}^{\epsilon}(z,z)=\text{Id}_{E_q}$.
Let also $g=g(\zeta,z)$ be a suitable weight such that $\zeta\mapsto g(\zeta,z)$ is holomorphic and $z\mapsto g(\zeta,z)$
has compact support, cf.\ Example~\ref{hjalpvikt}. Identifying sections of the $E_j^*$'s with row vectors, 
sections of the $E_j$'s with column vectors, mappings with matrices, and letting $(\cdot)^*$ denote matrix transpose
as in Section~\ref{intopsektion}, we get for any $E_q^*$-valued
holomorphic $p$-form $\xi$, in view of Section~\ref{intformelsektion}, that
\begin{eqnarray*}
\xi^*(\zeta) &=& \int_z\big(\check{G}^{\epsilon}\wedge g\big)_{N,N}^*\wedge\xi^*(z) =
\int_z  \big(R^{\epsilon}_q(z)H_q\wedge g\big)_{N,N}^*\wedge\xi^*(z) \\
&+& \int_z  \big(U^{\epsilon}_q(z)H_{q-1}f_q\wedge g\big)_{N,N}^*\wedge \xi^*(z) + 
\int_z  \big(f_{q+1}(z)U^{\epsilon}_{q+1}(z)H_q \wedge g\big)_{N,N}^*\wedge \xi^*(z).
\end{eqnarray*}
Notice that the last integral vanishes if $f_{q+1}^*\xi^*=0$, and that the second last integral is $f^*_q$-exact.
Since $R=R^0_{\kappa}+R^0_{\kappa+1}+\cdots$, it follows that $R^{\epsilon}_q(z)H_q\to R^0_q(z)H^0_q$ as $\epsilon\to 0$,
and so we see that if $f_{q+1}^*\xi^*=0$, then 
\begin{equation*}
\xi^*(\zeta)= \int_z\big(R^{0}_q(z)H_q^0\wedge g_{N-q}\big)^*\wedge\xi^*(z) + f_q^*\tilde{\xi}^*=
\Phi(\xi R^0_q)^* + f_q^*\tilde{\xi}^*,
\end{equation*} 
where $\tilde{\xi}$ is an $E^*_{q-1}$-valued holomorphic $p$-form. Hence, $\Phi$ induces a left inverse of $\Psi$.


\begin{thebibliography}{99}
\bibitem{MatsNoether} \textsc{Andersson, M.} Coleff-Herrera currents, duality, and Noetherian operators.
\textit{Bull. Soc. Math. France} 139 (2011), no.\ 4, 535--554.

\bibitem{MatsUnique} \textsc{Andersson, M.} Uniqueness and factorization of Coleff-Herrera currents.
\textit{Ann. Fac. Sci. Toulouse Math. (6)} 18 (2009), no.\ 4, 651--661.

\bibitem{MatsIntrepII} \textsc{Andersson, M.} Integral representation with weights. II. Division and interpolation.
\textit{Math. Z.} 254 (2006), no.\ 2, 315--332.

\bibitem{MatsLelong} \textsc{Andersson, M.} Residues of holomorphic sections and Lelong currents.
\textit{Ark. Mat.} 43 (2005), no.\ 2, 201--219.

\bibitem{MatsIntrep} \textsc{Andersson, M.} Integral representation with weights. I.
\textit{Math. Ann.} 326 (2003), no.\ 1, 1--18.

\bibitem{AL} \textsc{Andersson, M.; L\"{a}rk\"{a}ng, R.} The $\debar$-equation on a non-reduced analytic space.
\textit{arXiv:}1703.01861 [math.CV].

\bibitem{AS} \textsc{Andersson, M.; Samuelsson, H.} A Dolbeault-Grothendieck lemma on complex spaces
via Koppelman formulas.
\textit{Invent. Math.} 190 (2012), no.\ 2, 261--297.


\bibitem{AW3} \textsc{Andersson, M.; Wulcan, E.} Direct images of semi-meromorphic currents.
\textit{arXiv:}1411.4832v2 [math.CV].

\bibitem{AW2} \textsc{Andersson, M.; Wulcan, E.} Decomposition of residue currents.
\textit{J. Reine Angew. Math.} 638 (2010), 103--118.

\bibitem{AW1} \textsc{Andersson, M.; Wulcan, E.} Residue currents with prescribed annihilator ideals.
\textit{Ann. Sci. \'Ecole Norm. Sup. (4)} 40 (2007), no.\ 6, 985--1007.

\bibitem{Barlet} \textsc{D. Barlet:} Le faisceau $\omega_X^{\cdot}$ sur un espace analytique 
$X$ de dimension pure. 
\textit{Fonctions de plusieurs variables complexes, III (S\'{e}m.\ Fran\c{c}ois Norguet, 1975--1977),} 
pp.\ 187--204, Lecture Notes in Math., 670, \textit{Springer, Berlin,} 1978.

\bibitem{BjorkAbel} \textsc{Bj\"{o}rk, J.-E.} Residues and $\mathscr{D}$-modules.
\textit{The legacy of Niels Henrik Abel}, 605--651, \textit{Springer, Berlin,} 2004.

\bibitem{BjorkDmod} \textsc{Bj\"{o}rk, J.-E.} Analytic $\mathscr{D}$-modules and applications. 
Mathematics and its Applications, 247. \textit{Kluwer Academic Publishers Group, Dordrecht}, 1993. xiv+581 pp.

\bibitem{CH} \textsc{Coleff, N.; Herrera, M.} Les courants r\'{e}siduels associ\'{e}s \`{a} une
forme m\'{e}romorphe. Lecture Notes in Mathematics, 633. \textit{Springer, Berlin}, 1978. x+211 pp.

\bibitem{DP} \textsc{Demailly, J.-P.; Passare, M.} Courants r\'esiduels et classe fondamentale.
\textit{Bull. Sci. Math.} 119 (1995), no.\ 1, 85--94.

\bibitem{DS2} \textsc{Dickenstein, A.; Sessa, C.} R\'{e}sidus de formes m\'{e}romorphes et
cohomologie mod\'{e}r\'{e}e. 
\textit{G\'{e}ometrie complexe (Paris, 1992),} 35--59, Actualit\'{e}s Sci. Indust., 1438, 
\textit{Hermann, Paris,} 1996.

\bibitem{DS} \textsc{Dickenstein, A.; Sessa, C.} Canonical representatives in moderate cohomology.
\textit{Invent. Math.} 80 (1985), no.\ 3, 417--434.

\bibitem{HL} \textsc{Herrera, M.; Lieberman, D.} Residues and principal values on complex spaces.
\textit{Math. Ann.} {\bf 194} (1971), 259--294.

\bibitem{Horm} \textsc{H\"{o}rmander, L.} The analysis of linear partial differential operators. I.
Distribution theory and Fourier analysis. Second edition. Grundlehren der Mathematischen Wissenshaften, 256.
\textit{Springer-Verlag, Berlin}, 1990. xii+440 pp.

\bibitem{LSprodukter} \textsc{L\"{a}rk\"{a}ng, R; Samuelsson Kalm, H.} Various approaches to products of residue currents.
\textit{J. Funct. Anal.} 264 (2013), no.\ 1, 118--138.

\bibitem{LW} \textsc{L\"{a}rk\"{a}ng, R; Wulcan, E.} Residue currents and fundamental cycles.
\textit{arXiv:}1505.07289.

\bibitem{RSWSerre} \textsc{Ruppenthal, J.; Samuelsson Kalm, H.; Wulcan, E.} Explicit Serre duality on complex spaces.
\textit{Adv. Math.} 305 (2017), 1320--1355.

\bibitem{holoforms} \textsc{Samuelsson Kalm, H.} The $\debar$-equation, duality, and holomorphic forms on a 
reduced complex space.
\textit{arXiv:}1506.07842.
\end{thebibliography}
\end{document}